\def\MT@register@subst@font{\MT@exp@one@n\MT@in@clist\font@name\MT@font@list
   \ifMT@inlist@\else\xdef\MT@font@list{\MT@font@list\font@name,}\fi}
\theoremstyle{plain}
\newtheorem{theorem}{Theorem}
\newtheorem{lemma}[theorem]{Lemma}
\newtheorem{mytheorem}{Theorem}
\newcommand{\divides}{\mid}
\newcommand{\notdivides}{\nmid}
\newcommand{\setbuilder}[2]{\left\{#1\;\colon\,#2\right\}}
\newcommand{\set}[1]{\left\{#1\right\}}
\newcommand{\epsi}{\varepsilon}
\newcommand{\length}[1]{\lvert#1\rvert}
\newcommand{\card}[1]{\left\lvert#1\right\rvert}
\newcommand{\numbersystem}[1]{\mathbb{#1}}
\newcommand{\bN}{\numbersystem{N}}
\newcommand{\bR}{\numbersystem{R}}
\newcommand{\dE}{\vec{E}}
\newcommand{\dG}{\vec{G}}
\DeclareMathOperator{\diam}{diam}
\newcommand{\vect}[1]{\bm{#1}}
\newcommand{\vo}{\vect{o}}
\newcommand{\vs}{\vect{s}}
\newcommand{\vv}{\vect{v}}
\newcommand{\vx}{\vect{x}}
\newcommand{\vy}{\vect{y}}
\newcommand{\define}[1]{\emph{#1}}
\title{Favourite distances in high dimensions}
\author{Konrad J.\ Swanepoel\\[2mm]
{\small Department of Mathematics}\\
{\small London School of Economics and Political Science}\\
{\small Houghton Street, London WC2A 2AE, United Kingdom}\\
{\small Email: \href{mailto:k.swanepoel@lse.ac.uk}{k.swanepoel@lse.ac.uk}}
}
\date{\small 24\textsuperscript{th} August 2011}
\begin{document}
\maketitle

\begin{abstract}
Let $S$ be a set of $n$ points in $\bR^d$.
Assign to each $\vx\in S$ an arbitrary distance $r(\vx)>0$.
Let $e_r(\vx,S)$ denote the number of points in $S$ at distance $r(\vx)$ from $\vx$.
Avis, Erd\H{o}s and Pach (1988) introduced the extremal quantity $f_d(n)=\max\sum_{\vx\in S}e_r(\vx,S)$, where the maximum is taken over all $n$-point subsets $S$ of $\bR^d$ and all assignments $r\colon S\to(0,\infty)$ of distances.

We give a quick derivation of the asymptotics of the error term of $f_d(n)$ using only the analogous asymptotics of the maximum number of unit distance pairs in a set of $n$ points:
\[ f_d(n) = \left(1-\frac{1}{\lfloor d/2\rfloor}\right)n^2 + 
    \begin{cases} \Theta(n) & \text{if $d$ is even,}\\ \displaystyle \Theta((n/d))^{4/3}) & \text{if $d$ is odd.} \end{cases}
\]
The implied constants are absolute.
This improves on previous results of Avis, Erd\H{o}s and Pach (1988) and Erd\H{o}s and Pach (1990).

Then we prove a stability result for $d\geq 4$, asserting that if $(S,r)$ with $\card{S}=n$ satisfies $e_r(S)=f_d(n)-o(n^2)$, then, up to $o(n)$ points, $S$ is a Lenz construction with $r$ constant.
Finally we use stability to show that for $n$ sufficiently large (depending on $d$) the pairs $(S,r)$ that attain $f_d(n)$ are up to scaling exactly the Lenz constructions that maximise the number of unit distance pairs with $r\equiv 1$, with some exceptions in dimension $4$.

Analogous results hold for the furthest neighbour digraph, where $r$ is fixed to be $r(\vx)=\max_{\vy\in S} \length{\vx\vy}$ for $\vx\in S$.
\end{abstract}

\section{Introduction}
Denote the $d$-dimensional Euclidean space by $\bR^d$, and the Euclidean distance between points $\vx$ and $\vy$ by $\length{\vx\vy}$.
Let $S$ be a set of $n$ points in $\bR^d$.
Let $r\colon S\to(0,\infty)$ be a choice of a positive number for each point in $S$.
Define the \define{favourite distance digraph on $S$ determined by $r$} to be the directed graph $\dG_r(S)=(S,\dE_r(S))$ on the set $S$ where
\[\dE_r(S):=\setbuilder{(\vx,\vy)}{\vx,\vy\in S\text{ and }\length{\vx\vy}=r(\vx)}\text{.}\]
Write $e_r(S):=\card{\dE_r(S)}$.
Define
\[ f_d(n) := \max\setbuilder{e_r(S)}{S\subset\bR^d, \card{S}=n \text{ and } r\colon S\to(0,\infty)}\text{.} \]
Define $D=D_S\colon S\to(0,\infty)$ by
\[ D(\vx):=\max\setbuilder{\length{\vx\vs}}{\vs\in S}\text{.}\]
Then $\dG_D(S)$ is called the \define{furthest neighbour digraph} of $S$.
Define
\[ g_d(n) :=\max\setbuilder{e_D(S)}{S\subset\bR^d, \card{S}=n}\text{.}\]
Clearly $g_d(n)\leq f_d(n)$.
In fact $g_d(n)\sim f_d(n)\sim(1-1/\lfloor d/2\rfloor)n^2$ for any fixed $d\geq 4$ as $n\to\infty$ \cite{AEP, EP}.
A set $S$ of $n$ points and a function $r\colon S\to(0,\infty)$ define an \define{extremal favourite distance digraph} if $e_r(S)=f_d(\card{S})$.
Likewise, $S$ defines an \define{extremal furthest neighbour digraph} if $e_D(S)=g_d(\card{S})$.

\subsection*{Overview}
In this paper we prove a structure theorem for extremal favourite distance digraphs and furthest neighbour digraphs (Theorem~\ref{theorem:higherd}) for dimension $d\geq 4$.
This structure theorem follows from a stability result describing the pairs $(S,r)$ for which $e_r(S)$ is close to $f_d(n)$ (Theorem~\ref{theorem:4dstability}).
In Section~\ref{section1.1} we start off with an easy derivation of the optimal asymptotics of the error term of $f_d(n)$ (Theorem~\ref{theorem:asymptotics}).
This simple proof introduces the basic approach used in this paper.
Section~\ref{section1.2} gives a description of the Lenz configurations and formally states Theorem~\ref{theorem:higherd}.
Then we state Theorem~\ref{theorem:4dstability} in Section~\ref{sub:stability}.
Section~\ref{proof1} contains the proof of Theorem~\ref{theorem:4dstability} and Section~\ref{proof2} the proof of Theorem~\ref{theorem:higherd}.

\bigskip
Note that we only consider dimensions $d\geq 4$ in this paper.
For lower dimensions we only make the following remarks.
The current best estimates \[\frac{n^2}{4}+\frac{5n}{2}-6\leq f_3(n)\leq\frac{n^2}{4}+\frac{5n}{2}+6\]
for large $n$ can be found in another paper \cite{Sw-extremal3d}.
Csizmadia \cite{Cs} determined $g_3(n)$ exactly for large $n$.
In dimension $2$ a construction gives $f_2(n)=\Omega(n^{4/3})$ \cite[p.~187]{BMP}, while the best known upper bound $f_2(n)=O(n^{15/11+\epsi})$ is due to Aronov and Sharir \cite{AS}.
Avis \cite{Avis} and Edelsbrunner and Skiena \cite{ES} determined $g_2(n)$ exactly.

\bigskip
Throughout this paper $[k]$ denotes the set $\set{1,2,\dots,k}$, $\binom{S}{2}$ the set of unordered pairs of elements of $S$, $K_p$ the complete graph on $p$ vertices, and $K_p(t)$ the complete $p$-partite graph with $t$ elements in each class.

\section{Asymptotics}\label{section1.1}
The problem of determining $f_d(n)$ and $g_d(n)$ was originally introduced by Avis, Erd\H{o}s and Pach \cite{AEP}.
They determined $f_d(n)$ asymptotically for even $d\geq 4$.
Erd\H{o}s and Pach \cite{EP} finished off the case of odd $d\geq 5$.
\begin{theorem}[Avis-Erd\H{o}s-Pach \cite{AEP}, Erd\H{o}s-Pach \cite{EP}]\label{theorem:EPasymptotics}
For any $d\geq 4$,
\[f_d(n)=\left(1-\frac{1}{\lfloor d/2\rfloor}+o(1)\right)n^2\text{.}\]
\end{theorem}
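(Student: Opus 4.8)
The plan is to prove matching lower and upper bounds, each of order $(1-1/\lfloor d/2\rfloor)n^2$; I write $k:=\lfloor d/2\rfloor$ throughout.

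For the lower bound I would exhibit the Lenz construction. Choose $k$ mutually orthogonal $2$-planes through the origin in $\bR^{2k}\subseteq\bR^d$, and on the circle of radius $1/\sqrt2$ in each plane place $\lfloor n/k\rfloor$ or $\lceil n/k\rceil$ of the points, so that $\card S=n$. Any two points lying in different planes are orthogonal vectors of length $1/\sqrt2$, hence at distance exactly $1$; setting $r\equiv 1$, every such ordered pair is an edge of $\dG_r(S)$. Counting the ordered pairs of points in distinct classes gives $e_r(S)=n^2-k\cdot(n/k)^2+O(n)=(1-1/k)n^2+O(n)$, so $f_d(n)\geq(1-1/k)n^2-O(n)$.

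For the upper bound I would pass to the symmetrised graph $G$ on $S$ in which $\vx$ and $\vy$ are joined whenever $(\vx,\vy)\in\dE_r(S)$ or $(\vy,\vx)\in\dE_r(S)$; then $e_r(S)\leq 2\,e(G)$, so it suffices to show $e(G)\leq(1-1/k+o(1))\binom n2$. The engine is a forbidden-configuration statement: $G$ contains no copy of $K_{k+1}(t)$ for a suitable constant $t=t(d)$. Granting this, the Erd\H{o}s--Stone--Simonovits theorem immediately yields $e(G)=(1-1/k+o(1))\binom n2$, and hence $f_d(n)\leq(1-1/k+o(1))n^2$, which together with the lower bound finishes the proof. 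This density is the correct one: in the Lenz example $G=K_k(\lfloor n/k\rfloor)$ is $k$-chromatic and so already contains no $K_{k+1}$.

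It remains to establish the forbidden configuration, and this is where the difficulty lies. The geometric input is that the out-neighbours of a fixed vertex $\vx$ all lie on a single sphere $\Sigma_\vx$ centred at $\vx$, and that $j$ spheres in $\bR^d$ meet in a sphere of dimension at most $d-j$. Suppose for contradiction that $G$ contained classes $V_1,\dots,V_{k+1}$ of size $t$ with all cross pairs adjacent. Since each cross edge is oriented in at least one direction, a pigeonhole/Ramsey argument applied to this $2$-colouring of edge orientations lets me extract, after shrinking each class to a constant size depending only on $d$, a sub-configuration whose incidences are governed by a bounded family of spheres; intersecting these spheres then confines the extracted points to a sphere of dimension too small to host $k+1$ mutually ``orthogonal'' classes, forcing the affine dimension to exceed $2(k+1)-1>d$, a contradiction. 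The main obstacle, relative to the classical unit-distance case, is precisely that the radii $r(\vx)$ vary from point to point and that the edges are directed, so one cannot invoke a single sphere of fixed radius: the orientation colouring and the sphere-intersection dimension count must be combined carefully to produce a genuine Lenz-type obstruction. The odd-dimensional case $d=2k+1$ needs only the observation that a sphere in $\bR^{2k+1}$ is $2k$-dimensional, so the same count still caps the number of classes at $k$ and the extra coordinate does not improve the leading term.
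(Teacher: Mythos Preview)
Your lower bound via the Lenz construction is standard and matches. For the upper bound you are following the original Avis--Erd\H{o}s--Pach/Erd\H{o}s--Pach line: forbid any orientation of $K_{k+1}(t)$ as a sub-digraph of a favourite-distance digraph in $\bR^d$, deduce that the symmetrised graph $G$ is $K_{k+1}(t)$-free, and apply Erd\H{o}s--Stone. The strategy is correct, but your justification of the forbidden-configuration lemma is loose: the step ``forcing the affine dimension to exceed $2(k+1)-1$'' is not the right invariant once the radii vary, and the actual proof (Lemmas~4--5 of \cite{AEP}, extended in \cite{EP}) requires a more careful combination of the Ramsey extraction with iterated sphere intersections than you indicate.

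The present paper takes a genuinely different and more elementary route, proving the sharper Theorem~\ref{theorem:asymptotics} with explicit error terms. It splits the arcs of $\dG_r(S)$ into \emph{double edges} (pairs $\{\vx,\vy\}$ with both $(\vx,\vy)$ and $(\vy,\vx)$ present) and \emph{single edges}. On each connected component $S_i$ of the double-edge graph the function $r$ is constant, so $G_r^2[S_i]$ is a scaled unit-distance graph and $e_r(S_i)\le 2u_d(\card{S_i})$; between distinct components only single edges can occur, giving $e_r(S_i,S_j)+e_r(S_j,S_i)\le \card{S_i}\card{S_j}$. Summing these and inserting the known bound on $u_d$ (Theorem~\ref{cor:udextremal}) as a black box yields the upper bound after a short calculation, bypassing the forbidden-digraph lemma and Erd\H{o}s--Stone entirely. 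Your route is self-contained but relies on a hard geometric lemma and delivers only an $o(n^2)$ error; the paper's route reduces $f_d$ directly to $u_d$ and extracts the correct-order error term for free.
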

We note that for even dimensions $d\geq 4$, the error term in \cite{AEP} is $O(n^{2-\epsi})$ for some $\epsi>0$ independent of $d$.
The lower bound is obtained from the corresponding lower bound for the maximum number $u_d(n)$ of unit distance pairs in a set of $n$ points in $\bR^d$ (the \emph{Lenz construction} \cite{Erdos60}; see Section~\ref{section1.2}).
For any set $S\subset\bR^d$ of $n$ points, let
\[ u(S) := \card{\setbuilder{\set{\vx,\vy}}{\vx,\vy\in S \text{ and } \length{\vx\vy}=1}}\]
and set
\[ u_d(n) := \max\setbuilder{u(S)}{S\subset\bR^d \text{ and } \card{S}=n}\text{.} \]
Clearly $f_d(n)\geq 2u_d(n)$.
Similarly, $g_d(n)\geq 2M_d(n)$, where $M_d(n)$ is the maximum number of diameter pairs in a set of $n$ points in $\bR^d$, defined by
setting
\[ M(S) := \card{\setbuilder{\set{\vx,\vy}}{\vx,\vy\in S \text{ and } \length{\vx\vy}=\diam(S)}}\]
and
\[ M_d(n) := \max\setbuilder{M(S)}{S\subset\bR^d, \card{S}=n}\text{.} \]
We show that the determination of $f_d(n)$ and $g_d(n)$ in effect reduces to the unit distance problem when $d\geq 4$.
A first indication of this is a simple derivation of an asymptotic upper bound for $f_d(n)$ (Theorem~\ref{theorem:asymptotics} below) using only the analogous upper bounds for $u_d(n)$ stated in the following theorem.
\begin{theorem}[Erd\H{o}s \cite{Erdos67}, Erd\H{o}s-Pach \cite{EP}]\label{cor:udextremal}
There exist constants $c_1,c_2>0$ such that for each $d\geq 4$ and all $n\in\bN$,
\[ u_d(n)\leq \frac{1}{2}\left(1-\frac{1}{\lfloor d/2\rfloor}\right)n^2 + 
    \begin{cases} c_1n & \text{if $d$ is even,}\\ \displaystyle c_2\left(\frac{n}{d}\right)^{4/3} & \text{if $d$ is odd.} \end{cases}
\]
\end{theorem}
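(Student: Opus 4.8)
The plan is to reduce the estimate to a purely geometric forbidden-subgraph statement about the unit-distance graph, feed that into a Tur\'an-type edge count to get the leading term, and then sharpen the error by analysing the unit distances \emph{inside} each Tur\'an class with lower-dimensional unit-distance bounds. Write $k:=\lfloor d/2\rfloor$, let $S\subset\bR^d$ with $\card{S}=n$, and let $G$ be the unit-distance graph on $S$. The key claim I would isolate is
\[ G \text{ contains no copy of } K_{k+1}(3). \]
Granting this, a K\H{o}v\'ari--S\'os--Tur\'an / Erd\H{o}s--Stone--Simonovits bound for $K_{k+1}(3)$-free graphs gives at most $\tfrac12(1-1/k)n^2+o(n^2)$ edges, which already yields the leading term; all the real work lies in replacing $o(n^2)$ by the stated $O(n)$ and $O((n/d)^{4/3})$.

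To prove the claim I would argue directly. Suppose $V_1,\dots,V_m$ are the classes of an embedded $K_m(3)$, so each $V_i$ is a triple and all cross-distances equal $1$. First, no class can be collinear when $m\ge 2$: a point of another class would lie at distance $1$ from all three collinear points, but a sphere meets a line in at most two points. Hence each $V_i$ affinely spans a plane, and its difference space $D_i:=\operatorname{span}\set{\vx-\vx'\colon\vx,\vx'\in V_i}$ is $2$-dimensional. Second, for $i\ne j$, $\vx,\vx'\in V_i$ and $\vy\in V_j$, expanding the equal squared distances $\length{\vx\vy}^2=\length{\vx'\vy}^2$ and cancelling the $\vy$-norm shows $\langle\vx-\vx',\vy\rangle$ is independent of the choice of $\vy\in V_j$; subtracting the identity for a second point $\vy'\in V_j$ gives $\langle\vx-\vx',\vy-\vy'\rangle=0$. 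Thus $D_1,\dots,D_m$ are pairwise orthogonal, so $2m=\sum_i\dim D_i\le d$, i.e.\ $m\le k$, establishing the claim.

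For the sharp error term I would upgrade the $K_{k+1}(3)$-free condition to an almost complete $k$-partite structure: a stability/supersaturation argument produces a partition $S=C_1\cup\dots\cup C_k$ (after discarding a bounded set of exceptional points) so that almost every unit distance runs between classes. The between-class edges are bounded by the Tur\'an number $\tfrac12(1-1/k)n^2$. For the within-class edges I would exploit the rigidity from the lemma: the orthogonal $2$-dimensional difference spaces use up $2k$ of the $d$ available dimensions, so the leftover dimension $d-2k$ controls how much each class can spread. When $d=2k$ is even, nothing is left over and each class is forced onto a circle, where every point has at most two neighbours at distance $1$; this contributes $O(\card{C_i})$, hence $O(n)$ in total. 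When $d=2k+1$ is odd there is exactly one spare dimension, so at most one class can occupy a $3$-dimensional sphere; there the within-class unit distances number $O(\card{C_i}^{4/3})$ by the Spencer--Szemer\'edi--Trotter planar bound transported to the sphere, and with a balanced $\card{C_i}=O(n/k)=O(n/d)$ this is $O((n/d)^{4/3})$, the remaining classes again contributing only $O(n)$. Summing the two contributions gives the two cases.

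The main obstacle is precisely this sharp structural step: converting the qualitative ``$K_{k+1}(3)$-free $\Rightarrow o(n^2)$ error'' into the quantitative statement that, up to $O(n)$ (resp.\ $O((n/d)^{4/3})$) edges, $G$ is \emph{exactly} complete $k$-partite with every class pinned to a circle (resp.\ to a circle or one low-dimensional sphere). One must control the handful of classes and exceptional vertices that stability does not immediately fix, and verify that in the odd case the extra dimension genuinely helps only a single class, so that the balanced one-special-class configuration is extremal; the odd-dimensional within-class estimate further rests on transferring the $O(m^{4/3})$ planar unit-distance bound to the sphere.
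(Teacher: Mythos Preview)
The paper does not prove this theorem. It is stated with attribution to Erd\H{o}s \cite{Erdos67} and Erd\H{o}s--Pach \cite{EP} and then used as a black box in the proofs of Theorems~\ref{theorem:asymptotics} and~\ref{theorem:4dstability}. There is therefore no ``paper's own proof'' to compare your proposal against.

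As to the proposal itself: your forbidden-subgraph step is correct and is exactly the classical argument --- the orthogonality of the difference planes $D_i$ forces $m\le\lfloor d/2\rfloor$ in any $K_m(3)$, so the unit-distance graph is $K_{k+1}(3)$-free, and Erd\H{o}s--Stone gives the leading term $\tfrac12(1-1/k)n^2$. You are also right that the entire difficulty is the error term, and you flag it honestly. But the sketch you give for that step does not close the gap. Graph-theoretic stability (Erd\H{o}s--Simonovits) yields a $k$-partition with $o(n)$ exceptional vertices and $o(n^2)$ within-class and missing edges, not ``a bounded set of exceptional points''; you cannot simply discard the bad set. More seriously, the orthogonality of the $D_i$ applies to the classes of a \emph{specific} embedded $K_k(3)$, not to the Tur\'an classes $C_1,\dots,C_k$ produced by stability; there is no automatic reason those classes are each confined to a plane (or, in the odd case, all but one to a plane). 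Bridging this --- showing that a near-extremal unit-distance graph has its classes genuinely pinned to circles/spheres with only $O(n)$ or $O((n/d)^{4/3})$ edge-error rather than $o(n^2)$ --- is precisely the content of the Erd\H{o}s and Erd\H{o}s--Pach arguments and requires substantially more than a one-line invocation of stability.
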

The above bounds are tight up to the values of $c_1$ and $c_2$ \cite{EP}.
In fact Erd\H{o}s proved that for even $d\geq 4$ and sufficiently large $n$,
\[\frac{1}{2}\left(1-\frac{2}{d}\right)n^2 + n-\frac{d}{2} \leq u_d(n) \leq \frac{1}{2}\left(1-\frac{2}{d}\right)n^2 + n\text{.}\]
However, in the proof of the next theorem, we need a bound which holds for all $n\in\bN$.
Since $f_d(n)\geq 2u_d(n)$, the bounds in the next theorem are also tight up to the values of the constants.
\begin{mytheorem}\label{theorem:asymptotics}
With the same constants $c_1,c_2>0$ as in Theorem~\ref{cor:udextremal}, for each $d\geq 4$ and all $n\in\bN$,
\[ f_d(n)\leq \left(1-\frac{1}{\lfloor d/2\rfloor}\right)n^2 + 
    \begin{cases} 2c_1n & \text{if $d$ is even,}\\ \displaystyle 2c_2\left(\frac{n}{d}\right)^{4/3} & \text{if $d$ is odd.} \end{cases}
\]
\end{mytheorem}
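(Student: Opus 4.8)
The plan is to bound $e_r(S)$ directly in terms of the unit-distance maximum $u_d$ by partitioning $S$ according to the value of $r$. Write $k:=\lfloor d/2\rfloor$ and let $C_1,\dots,C_m$ be the classes on which $r$ is constant, say $r\equiv t_j$ on $C_j$, with $n_j:=\card{C_j}$ and $\sum_j n_j=n$. I would classify each directed edge $(\vx,\vy)\in\dE_r(S)$ as \emph{internal} if $\vx,\vy$ lie in the same class and \emph{crossing} otherwise, and estimate the two contributions separately.

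For the internal edges, note that if $\vx,\vy\in C_j$ then $(\vx,\vy)$ is an edge exactly when $\length{\vx\vy}=t_j$, and in that case $(\vy,\vx)$ is an edge as well; thus internal edges occur in reciprocal pairs, and their number equals twice the number of pairs of $C_j$ at distance $t_j$. Rescaling $C_j$ by $1/t_j$ turns these into unit-distance pairs, so this number is at most $2u_d(n_j)$, and the total internal contribution is at most $\sum_j 2u_d(n_j)$. For the crossing edges, the key observation is that a crossing pair $\set{\vx,\vy}$ with $\vx\in C_j$, $\vy\in C_{j'}$ and $j\neq j'$ can carry at most one directed edge: the edge $(\vx,\vy)$ requires $\length{\vx\vy}=t_j$ and $(\vy,\vx)$ requires $\length{\vx\vy}=t_{j'}$, while $t_j\neq t_{j'}$. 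Hence the crossing contribution is at most the number of crossing pairs, namely $\sum_{j<j'}n_jn_{j'}=\tfrac12\bigl(n^2-\sum_j n_j^2\bigr)$.

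Combining the two estimates and inserting the bound of Theorem~\ref{cor:udextremal}, I would obtain
\[ e_r(S) \le \sum_j 2u_d(n_j) + \tfrac12\Bigl(n^2-\sum_j n_j^2\Bigr) \le \tfrac12 n^2 + \Bigl(\tfrac12-\tfrac1k\Bigr)\sum_j n_j^2 + 2c_i\sum_j g(n_j), \]
where $g(m)=m$ for even $d$ and $g(m)=(m/d)^{4/3}$ for odd $d$, and $c_i\in\set{c_1,c_2}$ accordingly. Here the decisive point, and the reason the hypothesis $d\ge 4$ enters, is that $\tfrac12-\tfrac1k\ge 0$ precisely when $k\ge 2$; since also $\sum_j n_j^2\le n^2$, the quadratic part is maximised by a single class and never exceeds $\bigl(1-\tfrac1k\bigr)n^2$.

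It then remains to control the error term $\sum_j g(n_j)$, and this is where a little care is needed. Since $\sum_j n_j=n$, additivity gives $\sum_j g(n_j)=n=g(n)$ in the even case, while in the odd case the superadditivity of $m\mapsto m^{4/3}$, that is $\sum_j n_j^{4/3}\le\bigl(\sum_j n_j\bigr)^{4/3}$, yields $\sum_j g(n_j)\le g(n)$. In either case the error is at most $g(n)$, and the displayed bound collapses to exactly $\bigl(1-\tfrac1k\bigr)n^2+2c_i g(n)$, as required. I expect the only genuine subtlety to be the reciprocity observation for crossing pairs together with checking that the weight $\tfrac12-\tfrac1k$ is nonnegative; the passage to unit distances within each class and the superadditivity of the error are routine.
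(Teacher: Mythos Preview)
Your proof is correct and follows essentially the same approach as the paper. The only difference is the choice of partition: you partition $S$ by the level sets of $r$, whereas the paper partitions by the connected components of the double-edge graph $G^2_r(S)$; since $r$ is constant on each such component, the paper's partition refines yours, but the two key observations (internal edges come in reciprocal pairs and contribute at most $2u_d(n_j)$; crossing pairs carry at most one directed edge) and the subsequent calculation via the superadditivity inequality $\sum_j n_j^{\alpha}\le(\sum_j n_j)^{\alpha}$ are identical.
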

\begin{proof}
Let $S\subset\bR^d$ be an arbitrary set of $n$ points and $r\colon S\to(0,\infty)$ any function that assigns a positive real number to each point in $S$.
We next introduce notation and terminology that will also be used in later proofs.
We first decompose $\dG_r(S)$ into two ordinary graphs.
Let $G^1_r(S)=(S,E^1_r)$ be the graph of \define{single edges}, where 
\[E^1_r:=\setbuilder{\set{\vx,\vy}}{(\vx,\vy)\in\dE_r(S),(\vy,\vx)\notin\dE_r(S)}\text{.}\]
Let $G^2_r(S)=(S,E^2_r)$ be the graph of \define{double edges}, where
\[E^2_r:=\setbuilder{\set{\vx,\vy}}{(\vx,\vy),(\vy,\vx)\in\dE_r(S)}\text{.}\]
Write the connected components of $G^2_r(S)$ as $G^2_r[S_i]$ for $i\in[k]$, where $\set{S_1,\dots,S_k}$ partitions $S$.
For subsets $A,B\subseteq S$, let
\[\dE_r(A,B):=\setbuilder{(\vx,\vy)\in\dE_r(S)}{\vx\in A, \vy\in B}\]
and $e_r(A,B):=\card{\dE_r(A,B)}$.
Write $n_i:=\card{S_i}$ for each $i\in[k]$.

The proof is based on the following two simple facts.
\begin{enumerate}
\item Each $G_r^2[S_i]$ is a scaling of a unit distance graph.
Therefore, $e_r(S_i)\leq u_d(n_i)$.
\item There can only be single edges between different $S_i$.
Consequently, \[e_r(S_i,S_j)+e_r(S_j,S_i)\leq n_in_j\quad\text{for any distinct $i,j$.}\]
\end{enumerate}
The proof is finished by a calculation.
Note that
\begin{align*}
e_r(S) & =\sum_{i=1}^k e_r(S_i)+\sum_{\set{i,j}\in\binom{[k]}{2}}(e_r(S_i,S_j)+e_r(S_j,S_i))\\
&\leq \sum_{i=1}^k 2u_d(n_i)+\sum_{\set{i,j}\in\binom{[k]}{2}}n_in_j\text{.}\\
\intertext{Now fix $S$ and $r$ so that $f_d(n)=e_r(S)$, and apply Theorem~\ref{cor:udextremal} to obtain for odd dimensions $d\geq5$ that}
f_d(n) & \leq \sum_{i=1}^k\left(2u_d(n_i)-\frac{1}{2}n_i^2\right)+\sum_{i=1}^k\frac{1}{2}n_i^2+\sum_{\set{i,j}\in\binom{[k]}{2}}n_in_j\\
& = \sum_{i=1}^k\left(2u_d(n_i)-\frac{1}{2}n_i^2\right) + \frac{1}{2}n^2\\
& \leq\sum_{i=1}^k \left(\left(1-\frac{1}{\lfloor d/2\rfloor}\right)n_i^2 + 2c_2\left(\frac{n_i}{d}\right)^{4/3} -\frac{1}{2}n_i^2\right) + \frac{1}{2}n^2\\
& =\sum_{i=1}^k \left(\left(\frac{1}{2}-\frac{1}{\lfloor d/2\rfloor}\right)n_i^2 + 2c_2\left(\frac{n_i}{d}\right)^{4/3}\right) + \frac{1}{2}n^2\\
& \leq\left(\frac{1}{2}-\frac{1}{\lfloor d/2\rfloor}\right)\left(\sum_{i=1}^k n_i\right)^2 + 2c_2\left(\frac{\sum_{i=1}^k n_i}{d}\right)^{4/3} + \frac{1}{2}n^2\\
& =\left(1-\frac{1}{\lfloor d/2\rfloor}\right)n^2 + 2c_2\left(\frac{n}{d}\right)^{4/3} \text{,}
\end{align*}
where the last inequality follows from the inequality
\begin{equation}\label{alphaineq}
\sum_{i=1}^k n_i^\alpha\leq\left(\sum_{i=1}^k n_i\right)^{\alpha}\quad\text{for all $n_i\geq 0$ and $\alpha\geq 1$,}
\end{equation}
which is easily seen to be true (for example from Minkowski's inequality).
The calculation for even values of $d\geq 4$ is similar.
\end{proof}

\section{Extremal configurations}\label{section1.2}
By a \define{Lenz configuration for distance $\lambda>0$} we mean a finite set of the following type \cite{Erdos60, Brass97}.

If $d\geq 4$ is even, let $p=d/2$ and consider any orthogonal decomposition $\bR^d=V_1\oplus\dots\oplus V_p$ with all $V_i$ $2$-dimensional.
In each $V_i$, let $C_i$ be the circle with centre the origin $\vo$ and radius $r_i$, such that $r_i^2+r_j^2=\lambda^2$ for all distinct $i$ and $j$.
When $d\geq 6$ this implies that each $r_i=\lambda/\sqrt{2}$.
We call the $p$ circles $(C_1,\dots,C_p)$ an \define{even-dimensional Lenz system}.
Define an \define{even-dimensional Lenz configuration for the distance $\lambda$} to be any finite subset $S$ of some translate $\vv+\bigcup_{i=1}^p C_i$ of the circles.
The \define{partition associated to the Lenz configuration $S$} is the partition induced by the circles, i.e.\ the $p$ subsets $S_1,\dots,S_p$ where $S_i=S\cap(\vv+C_i)$.

If $d\geq 5$ is odd, let $p=\lfloor d/2\rfloor$, and consider any orthogonal decomposition $\bR^d=V_1\oplus\dots\oplus V_p$ with $V_1$ $3$-dimensional and all other $V_i$ ($i=2,\dots,p$) $2$-dimensional.
Let $\Sigma_1$ be the $2$-sphere in $V_1$ with centre $\vo$ and radius $r_1$, and for each $i=2,\dots,p$, let $C_i$ be the circle with centre $\vo$ and radius $r_i$, such that $r_i^2+r_j^2=\lambda^2$ for all distinct $i, j$.
When $d\geq 7$, necessarily each $r_i=\lambda/\sqrt{2}$.
We call the $2$-sphere and $p-1$ circles $(\Sigma_1,C_2,\dots,C_p)$ an \define{odd-dimensional Lenz system}.
We define an \define{odd-dimensional Lenz configuration for the distance $\lambda$} to be any finite subset of some translate $\vv+\left(\Sigma_1\cup\bigcup_{i=2}^p C_i\right)$ of the $2$-sphere and circles.
The \define{partition associated to the Lenz configuration $S$} is the partition induced by the $2$-sphere and circles, i.e.\ the $p$ subsets $S_1,\dots,S_p$ where $S_1=S\cap\Sigma_1$ and for $i\geq 2$, $S_i=S\cap(\vv+C_i)$.
The following theorem
states that the extremal sets for unit distances and for diameters are Lenz configurations, at least for a sufficiently large number of points.

\begin{theorem}[\cite{Brass97, Sw09}]\label{theorem:udextremal}
For any $d\geq 4$ there exists $n_0\in\bN$ such that any set $S$ for which $\card{S}=n\geq n_0$ and such that $u(S)=u_d(n)$ is a Lenz configuration for the distance $1$.

For any $d\geq 4$ there exists $n_0\in\bN$ such that any set $S$ for which $\card{S}=n\geq n_0$ and such that $M(S)=M_d(n)$ is a Lenz configuration for the distance $\diam(S)$.
\end{theorem}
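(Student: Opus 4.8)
The plan is to combine a Turán-type stability result for near-optimal configurations with a local exchange argument that upgrades ``approximately Lenz'' to ``exactly Lenz''. The geometric engine throughout is the following elementary orthogonality lemma: if $A,B\subseteq\bR^d$ satisfy $\length{\vx\vy}=\lambda$ for every $\vx\in A$ and $\vy\in B$, then the direction spaces of the affine hulls of $A$ and $B$ are orthogonal, and, after a suitable translation, each of $A$ and $B$ lies on a sphere in its own linear subspace. (For $\vx,\vx'\in A$ and $\vy\in B$, expanding $\length{\vx\vy}^2=\length{\vx'\vy}^2$ shows that $\langle\vx-\vx',\vy\rangle$ is independent of $\vy$, whence $\vx-\vx'\perp\vy-\vy'$; the sphere claim then follows by writing the configuration in the orthogonal product.) Iterating over classes that are mutually at distance $\lambda$, the direction spaces become pairwise orthogonal, so their dimensions sum to at most $d$; since a class carrying a positive proportion of the points must span at least two dimensions, there are at most $\lfloor d/2\rfloor$ such classes, which is precisely the Lenz bound.

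First I would establish the coarse structure by stability. The unit distance graph in $\bR^d$ contains no copy of the complete $(\lfloor d/2\rfloor+1)$-partite graph $K_{\lfloor d/2\rfloor+1}(3)$: in such a subgraph each part, having three points all at distance $\lambda$ from a common external point, cannot be collinear (a line meets a sphere in at most two points), so each part spans at least two dimensions; the orthogonality lemma then forces pairwise orthogonal direction spaces summing to at least $2(\lfloor d/2\rfloor+1)>d$, a contradiction. By the Erd\H{o}s--Stone--Simonovits theorem this recovers $u_d(n)=\bigl(1-1/\lfloor d/2\rfloor\bigr)n^2/2+o(n^2)$, and, crucially, its stability version shows that any $S$ with $u(S)$ within $o(n^2)$ of $u_d(n)$ has unit distance graph $o(n^2)$-close in edit distance to the Tur\'an graph $T_p(n)$, where $p:=\lfloor d/2\rfloor$. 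Thus an extremal $S$ admits a partition into $p$ parts $S_1,\dots,S_p$, each of size $(1+o(1))n/p$, carrying almost all the unit distances between parts, with only $o(n^2)$ missing cross-pairs or internal edges. Applying the orthogonality lemma to large subsets of distinct parts then forces the parts to lie (nearly) on mutually orthogonal circles with radii satisfying $r_i^2+r_j^2=\lambda^2$, with one part allowed a $2$-sphere in the odd case.

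The hard part is upgrading ``nearly'' to ``exactly''. Fix the Lenz system determined by the bulk of the points and consider any $\vx\in S$. Its contribution to $u(S)$ is its number of unit-distance neighbours. A point placed correctly on a circle of the system is at distance $\lambda$ from every point of every other circle, giving a contribution $(1-1/p)n(1+o(1))=\Omega(n)$; by contrast, a point off the system meets each circle in at most two points, so achieving a large contribution requires $\vx$ to be at distance $\lambda$ from an entire circle in each of the other $p-1$ classes, which by the lemma is exactly the Lenz condition and pins $\vx$ onto the remaining circle. Hence any point violating the Lenz structure can be moved onto the appropriate circle, and for $n$ large the gain $\Omega(n)$ strictly dominates any loss, contradicting extremality; repeating this forces $S$ to be an exact Lenz configuration for the distance $1$.

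Finally, the diameter statement runs in parallel: the diameter graph obeys the same forbidden-subgraph property with $\lambda=\diam(S)$, one has $M_d(n)\le u_d(n)$ with matching asymptotics, and the identical stability-plus-exchange argument applies, together with the routine verifications that a Lenz system with the prescribed radii has diameter exactly $\lambda$ and loses no diameter pair. I expect the exchange/exactness step to be the main obstacle, since it demands controlling single points against an almost-extremal background and excluding low-dimensional degeneracies; the precise threshold $n_0$ and the small-dimensional subtleties enter exactly here.
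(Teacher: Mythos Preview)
This theorem is not proved in the present paper; it is quoted from \cite{Brass97} and \cite{Sw09} and used as a black box, so there is no proof here to compare your proposal against.

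That said, your outline is broadly the strategy of those cited references: Erd\H{o}s--Stone--Simonovits stability yields an approximate $p$-partition, the orthogonality lemma pins the bulk of each class onto a circle (or a $2$-sphere in the odd-dimensional case), and a point-by-point clean-up removes exceptional points. The one place where your sketch is too optimistic is the exchange step in the diameter case. You cannot simply ``move a point onto the appropriate circle'', because relocating a point may increase $\diam(S)$ and thereby invalidate all existing diameter pairs; the verification you call ``routine'' is not. In \cite{Sw09} the argument is arranged differently: one first determines the exact structure of the extremal diameter graphs \emph{on} a Lenz system (which is where the formulas in Lemma~\ref{exactvalues2} come from), and then shows that any point not on the system already has too few diameter-neighbours, without performing an exchange. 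Your unit-distance argument is closer to what is actually done, though even there the odd-dimensional case needs separate care since one class sits on a $2$-sphere rather than a circle.
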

As a corollary of the main result of this paper (Theorem~\ref{theorem:4dstability} in Section~\ref{sub:stability}) we show that when $d\geq 4$, the extremal favourite distance digraphs (furthest neighbour digraphs) are exactly the same as the sets for which $u_d(n)$ ($M_d(n)$ respectively) is maximised, for all sufficiently large $n$, depending on $d$, except when $d=4$, where there is an exceptional construction for all sufficiently large $n\equiv 1\pmod{8}$.

\begin{mytheorem}\label{theorem:higherd}
For any $d\geq 4$ there exists $n_0\in\bN$ such that the following holds.
\begin{enumerate}
\item Let $S\subset\bR^d$ and a function $r\colon S\to(0,\infty)$ be given for which $\card{S}=n\geq n_0$ and $e_r(S)=f_d(n)$.
Then $r\equiv c$ for some $c>0$ and $S$ is a Lenz configuration for the distance $c$, except when $d=4$ and $n-1$ is divisible by $8$, where the following situation is also possible:
for some $a\in S$ and $c>0$, $S\setminus\set{a}$ is a Lenz configuration for the distance $c$ on two circles $C_1$ and $C_2$ of equal radius $c/\sqrt{2}$, $a$ is the common centre of the two circles, $C_i\cap S$ consists of the vertices of $(n-1)/8$ squares inscribed in $C_i$ \textup{(}$i=1,2$\textup{)}, and $\left.r\right|_{S\setminus\set{a}}\equiv c$, $r(a)=c/\sqrt{2}$.
\item Let $S\subset\bR^d$ be given for which $\card{S}=n\geq n_0$ and $e_D(S)=g_d(n)$.
Then $r\equiv\diam(S)$ and $S$ is a Lenz configuration for the distance $\diam(S)$.
\end{enumerate}
In particular, $f_d(n)=2u_d(n)$ and $g_d(n)=2M_d(n)$ for all $d\geq 4$ and $n\geq n_0(d)$.
\end{mytheorem}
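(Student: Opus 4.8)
The plan is to combine the stability theorem (Theorem~\ref{theorem:4dstability}) with the partition bound already established in the proof of Theorem~\ref{theorem:asymptotics}, and then to pass from that approximate description to an exact one by a local exchange argument. Fix an extremal pair $(S,r)$ with $\card{S}=n$ and $e_r(S)=f_d(n)$, put $p=\lfloor d/2\rfloor$, and recall the decomposition of $\dG_r(S)$ into its double-edge components $S_1,\dots,S_k$ with $n_i=\card{S_i}$. As observed there, $r$ is constant on each $S_i$, every edge inside $S_i$ is a double edge, and only single edges run between distinct components, so that
\[ e_r(S)\le\sum_{i=1}^{k}2u_d(n_i)+\sum_{\set{i,j}\in\binom{[k]}{2}}n_in_j\text{.} \]
Because $e_r(S)=f_d(n)\ge 2u_d(n)$, this chain must be essentially tight: each $S_i$ has to be a scaling of a unit-distance-extremal set, and almost all of the $n_in_j$ cross pairs must carry an edge.

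First I would invoke Theorem~\ref{theorem:4dstability} to recover the gross geometry: all but $o(n)$ points of $S$ lie on a single Lenz system with $r\equiv\lambda$ there. For every large component $S_i$ this identifies the scale $r_i$, and Theorem~\ref{theorem:udextremal} then upgrades $S_i$ to an exact Lenz configuration for $r_i$. What remains is to remove the $o(n)$ exceptional points and to merge the components into one configuration of a single scale.

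The decisive step is to optimise the partition bound. Writing $2u_d(m)=(1-\tfrac1p)m^2+O(\mathrm{err}(m))$ with $\mathrm{err}$ as in Theorem~\ref{cor:udextremal}, its quadratic part equals $(\tfrac12-\tfrac1p)\sum_i n_i^2+\tfrac12 n^2$. For $p\ge 3$, that is $d\ge 6$, the coefficient $\tfrac12-\tfrac1p$ is strictly positive, so by~\eqref{alphaineq} the bound is strictly largest when $k=1$; together with $f_d(n)\ge 2u_d(n)$ this forces a single component, hence $S$ a Lenz configuration with $r\equiv\lambda$. For $p=2$, that is $d\in\set{4,5}$, the quadratic part is partition-independent and the decision passes to the lower-order term of $u_d$: when $d=5$ the lower-order term grows like $(m/d)^{4/3}$, and its superadditivity again singles out $k=1$, but when $d=4$ the error is merely linear and the balance degenerates. \emph{This degeneracy is the main obstacle.} Here one must use the precise extremal structure on two orthogonal circles of radius $\lambda/\sqrt{2}$ — in particular that within a circle the distance-$\lambda$ graph has maximum degree $2$ and is extremal exactly as a union of inscribed squares — to compare, up to additive constants, the pure two-circle Lenz configuration against the alternative in which one point is moved to the common centre $\vo$ and assigned $r(\vo)=\lambda/\sqrt{2}$: this replaces its double edges by $n-1$ single out-edges reaching every other point, and a direct count shows that for $n\equiv 1\pmod 8$ this exactly ties the pure value while for all other residues it strictly loses. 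Enumerating the remaining equality cases of the partition bound then shows these are the only maximisers, yielding the stated exception and no other. Throughout, stray points are cleaned off by deleting a misplaced $\vx$ (removing $d^+(\vx)+d^-(\vx)$ edges) and reinserting it on a Lenz circle with $r=\lambda$ (recovering about $2(n-n_i)$ edges), extremality forbidding any gain; controlling the in-degree of a misplaced point — using that $r\equiv\lambda$ off an $o(n)$-set, so its in-neighbours lie on one sphere meeting each circle in at most two points — is the technical crux that pins every remaining point onto the configuration and makes $r$ constant off the possible centre.

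Finally, part~(2) follows from the same scheme applied to the furthest neighbour digraph, with $M_d$ in place of $u_d$ and the diameter half of Theorem~\ref{theorem:udextremal}. The only difference is that $r$ is now forced to equal $\diam(S)$, which excludes the exceptional construction: the centre $\vo$ lies at distance $\lambda/\sqrt{2}<\lambda$ from every circle point, so $\lambda/\sqrt{2}$ could be a furthest-neighbour distance only if the circles were confined to quarter-arcs, contradicting the configuration being extremal. The identities $f_d(n)=2u_d(n)$ and $g_d(n)=2M_d(n)$ then drop out once the maximisers are known to be Lenz configurations with constant $r$.
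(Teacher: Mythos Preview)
Your overall plan---stability first, then a counting/exchange argument to kill the exceptional points---is the paper's. The gap is in the counting. You argue that the partition upper bound $\sum_i 2u_d(n_i)+\sum_{i<j}n_in_j$ (or its quadratic-plus-error approximation) is maximised at $k=1$, and that together with $f_d(n)\ge 2u_d(n)$ this forces $k=1$. That inference is invalid: maximising an \emph{upper} bound over partitions says nothing about which partition actually occurs. What you need is that for $k\ge 2$ the partition bound drops strictly \emph{below} $2u_d(n)$, and that requires a \emph{lower} bound on $u_d(n)$ relative to the $u_d(n_i)$---a growth estimate $u_d(n)-u_d(n-k)\ge(1-\tfrac1p)k(n-k)$. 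The paper proves exactly this (Lemma~\ref{nklemma}) from the structure theorem for unit-distance extremals; with it the case $d\ge 6$ does go through essentially as you say. Your ``superadditivity of $m^{4/3}$'' for $d=5$ fails for the same reason, and worse: even plugging the paper's sharpened bound $u_5(n)-u_5(n-k)\ge\tfrac12 k(n-k)+\tfrac14(k^2+2k-1)$ into your inequality together with the trivial $e_r(T)\le k(k-1)$ gives only $k^2-4k+1\ge 0$, which does not exclude $k\ge 4$.

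What the paper supplies in dimensions $4$ and $5$---and what your sketch does not---is a geometric step that your own in-degree remark almost reaches but then gets wrong. You assert that the sphere about a stray $\vx$ meets each Lenz circle in at most two points; this is false precisely when $\vx$ is equidistant from the circle, and that equidistant case is the whole story. The paper shows that every $\vx\in T$ must have large total degree to $S_2\subset C_2$, hence is equidistant from $C_2$, hence lies in $V_1$. In $d=4$ the symmetric argument with $C_1$ then gives $T\subseteq V_1\cap V_2=\{\vo\}$, which is how the centre is isolated (and one then checks against the exact values in Lemma~\ref{exactvalues1} that the tie occurs only when $8\divides n-1$, while for furthest neighbours the equal-radius constraint caps the diameter count at $t_2(n)+2$, contradicting Lemma~\ref{exactvalues2}). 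In $d=5$, with $T\subset V_1$ one next argues that two points of $T$ on distinct lines through $\vo$ would force at least $n/3$ points of $S_1$ onto each of two pairs of circles on $\Sigma_1$, contradicting $\card{S_1}=n/2+o(n)$; hence $T$ lies on one line, so $e_r(T)\le 2k-2$, and now the growth bound yields $(k-1)^2+2\le 0$. Without this equidistant analysis you can neither pin $T$ to $\{\vo\}$ in dimension~$4$ nor rule out $\card{T}\ge 4$ in dimension~$5$.
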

Note that the exact values of $u_d(n)$ for even $d\geq 4$ and of $M_d(n)$ for all $d\geq 4$ are known, at least for sufficiently large $n$ \cite{Brass97, Sw09}; see Lemmas~\ref{exactvalues1} and \ref{exactvalues2} for some of these values.

\section{Stability}\label{sub:stability}
The following theorem states that if the number of unit distance pairs of points from $S\subset\bR^d$, with $n:=\card{S}$ is sufficiently large, is within $o(n^2)$ of the maximum $u_d(n)$, then $S$ is a Lenz configuration up to $o(n)$ points.
\begin{theorem}[\cite{Sw09}]\label{theorem:udstability}
For any $d\geq 4$ and $\epsi>0$ there exist $\delta>0$ and $n_0\in\bN$ such that for any set $S$ with $\card{S}=n\geq n_0$ that satisfies
\[u(S)>\frac{1}{2}\left(1-\frac{1}{p}-\delta\right)n^2\quad\text{\textup{(}where $p=\lfloor d/2\rfloor$\textup{)},}\]
there exists a subset $T\subseteq S$ such that $\card{T}<\epsi n$ and $S\setminus T$ is a Lenz configuration.
Furthermore, the partition $S_1,\dots,S_p$ of $S\setminus T$ associated to the Lenz configuration satisfies
\[ \frac{n}{p}-\epsi n < \card{S_i} < \frac{n}{p} +\epsi n \quad \text{for all $i\in[p]$.}\]
\end{theorem}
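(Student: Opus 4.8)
The plan is to deduce the statement from a forbidden-subgraph property of unit-distance graphs together with the Erd\H{o}s--Simonovits stability theorem, and then to upgrade the resulting approximate partition to an exact Lenz configuration by a cleaning argument. Throughout, let $G$ denote the unit-distance graph on $S$, so that $u(S)$ is its number of edges, and recall $p=\lfloor d/2\rfloor$.

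\emph{A forbidden configuration.} First I would show that $G$ contains no copy of the complete $(p+1)$-partite graph $K_{p+1}(3)$. Call two vertex sets \emph{unit-complete} if every pair with one point in each is at distance $1$. A direct computation shows that if $A$ and $B$ are unit-complete then $\langle\vx-\vx',\vy-\vy'\rangle=0$ for all $\vx,\vx'\in A$ and $\vy,\vy'\in B$, so the linear spans of the internal differences of distinct unit-complete sets are mutually orthogonal. Moreover, three collinear points cannot be simultaneously at distance $1$ from a common point, since the squared distance to a fixed point, restricted to a line, is a strictly convex quadratic and attains each value at most twice; hence any unit-complete class of size $3$ spans an affine subspace of dimension at least $2$. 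If $A_0,\dots,A_p$ were pairwise unit-complete classes of size $3$, their difference spans would be $p+1$ pairwise orthogonal subspaces of dimension at least $2$, forcing $2(p+1)\le d$, which is impossible for $d\le 2p+1$. Thus $K_{p+1}(3)\not\subseteq G$.

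\emph{Graph stability.} Since $K_{p+1}(3)$ has chromatic number $p+1$, the Erd\H{o}s--Stone--Simonovits theorem gives it Tur\'an density $1-1/p$, matching the hypothesis $u(S)>\frac{1}{2}(1-1/p-\delta)n^2$. The stability form of this theorem then yields, for $\delta$ small and $n$ large, a partition $S=U_1\cup\dots\cup U_p$ for which the number of edges of $G$ inside the classes, together with the number of non-adjacent cross pairs, is at most $\eta n^2$, with $\eta\to 0$ as $\delta\to 0$. Comparing $u(S)$ with the edge count of the balanced complete $p$-partite graph forces $\card{U_i}=n/p+o(n)$ for every $i$, which will give the asserted balance of the parts.

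\emph{Cleaning and geometry.} Next I would delete a set $T$ of $o(n)$ vertices, namely those incident to abnormally many within-class edges or to abnormally many missing cross edges; after this every surviving vertex is at distance $1$ from all but $o(n)$ points of each other class and from at most $o(n)$ points of its own class. To locate a class geometrically, I fix a bounded set of surviving reference points from the other classes whose internal differences span a subspace of dimension $d-2$ (respectively $d-3$ for the distinguished class that occurs when $d$ is odd). All but $o(n)$ points of the target class are then adjacent to every reference, and by the orthogonality computation of Step~1 they lie on the intersection of the corresponding unit spheres, which is confined to a $2$-flat and is therefore a single circle --- or, for the distinguished class in odd dimensions, a $2$-sphere. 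Absorbing the exceptional points into $T$, each class lies exactly on a circle (or $2$-sphere), and the unit-complete cross relations together with the orthogonality of the difference spans force these to lie in mutually orthogonal planes about a common centre with radii satisfying $r_i^2+r_j^2=1$. Thus $S\setminus T$ is exactly a Lenz configuration with $\card{T}<\epsi n$, with the balance inherited from Step~2.

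The main obstacle is this last step: converting approximate $p$-partiteness into an \emph{exact} Lenz configuration while discarding only $o(n)$ points. One must select reference points whose unit spheres meet in exactly a $1$-dimensional (respectively $2$-dimensional) sphere and which are simultaneously adjacent to all but $o(n)$ of the target class, so that the discarded set stays of size $o(n)$; keeping track of the distinguished spherical class in odd dimensions, and excluding the degenerate small-radius possibilities, is where the geometry must be handled with the most care.
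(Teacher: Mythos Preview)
The paper does not contain a proof of this theorem: it is quoted from \cite{Sw09} and used as a black box in the proof of Theorem~\ref{theorem:4dstability}. There is therefore nothing in the present paper to compare your proposal against.

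That said, your sketch follows the standard route for this type of result and is essentially the strategy used in \cite{Sw09}: forbid $K_{p+1}(3)$ in the unit-distance graph via the orthogonality-of-difference-spans argument, invoke Erd\H{o}s--Simonovits stability to obtain an approximate $p$-partition with balanced parts, and then clean to place each class on a circle (or $2$-sphere in the odd case). Your identification of the forbidden subgraph and the dimension count in Step~1 are correct, and your description of the cleaning step correctly isolates the genuine difficulty. One point to be careful with in Step~3: after deleting low-quality vertices you need, for each class, to find a set of reference points in the \emph{other} classes whose difference span has dimension exactly $d-2$ (or $d-3$). Merely having many edges across does not immediately give this; you must argue that within each class the points are not concentrated on a lower-dimensional affine flat, which follows from the forbidden-subgraph bound applied inside a class (a circle or line can carry only $O(n)$ unit distances to a fixed point set). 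You should also check that the circles you obtain are concentric with a \emph{common} centre and in pairwise orthogonal $2$-flats, which requires using cross-edges between several classes simultaneously rather than fixing one class at a time. These are exactly the ``degenerate small-radius possibilities'' you allude to, and they are where the argument in \cite{Sw09} spends most of its effort.
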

The next theorem is an analogue of the above theorem for favourite distance digraphs.
\begin{mytheorem}\label{theorem:4dstability}
For any $d\geq 4$ and any $\epsi>0$ there exist $\delta>0$ and $n_0\in\bN$ such that for any $S\subset\bR^d$ with $\card{S}=n\geq n_0$ and any $r\colon S\to(0,\infty)$ that satisfy
\[e_r(S)>\left(1-\frac{1}{p}-\delta\right)n^2\quad\text{\textup{(}where $p=\lfloor d/2\rfloor$\textup{)},}\]
there exist $T\subseteq S$ and $c>0$ such that $\card{T}<\epsi n$, $S\setminus T$ is a Lenz configuration with distance $c$, and $\left. r\right|_{S\setminus T}\equiv c$.
Furthermore, the partition $S_1,\dots,S_p$ of $S\setminus T$ associated to the Lenz configuration satisfies
\[ \frac{n}{p}-\epsi n < \card{S_i} < \frac{n}{p} +\epsi n \quad \text{for all $i\in[p]$.}\]
\end{mytheorem}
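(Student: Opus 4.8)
The plan is to build on the decomposition and the two facts from the proof of Theorem~\ref{theorem:asymptotics}, and then to feed a near-extremal configuration into the unit-distance stability theorem (Theorem~\ref{theorem:udstability}). First I would decompose $\dG_r(S)$ into single and double edges and write the double-edge components as $S_1,\dots,S_k$ with $n_i=\card{S_i}$. Two observations turn the earlier counting into a stability statement. Since a double edge $\set{\vx,\vy}$ forces $\length{\vx\vy}=r(\vx)=r(\vy)$ and each $S_i$ is connected through double edges, $r$ is constant on each $S_i$, say $r\equiv c_i$; consequently every favourite edge leaving $S_i$ has length $c_i$, and the favourite edges inside $S_i$ form a subgraph of the $c_i$-distance graph, so $e_r(S_i)\le 2u_d(n_i)$. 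Summing exactly as before yields
\[ e_r(S)\le \Bigl(\tfrac12-\tfrac1p\Bigr)\sum_i n_i^2+\tfrac12 n^2+\sum_i\mathrm{err}(n_i),\]
where $\sum_i\mathrm{err}(n_i)=o(n^2)$ by Theorem~\ref{cor:udextremal} and inequality~\eqref{alphaineq}. Writing the within-deficiency $\sum_i(2u_d(n_i)-e_r(S_i))$ and the cross-deficiency $\sum_{i<j}(n_in_j-e_r(S_i,S_j)-e_r(S_j,S_i))$, both of which are nonnegative, the hypothesis $e_r(S)>(1-\tfrac1p-\delta)n^2$ forces each of them to be $O(\delta n^2)+o(n^2)$.

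For $p\ge 3$ the coefficient $\tfrac12-\tfrac1p$ is positive, so the displayed bound already does the work: it gives $n^2-\sum_i n_i^2=O(\delta n^2)+o(n^2)$, and since $\sum_i n_i^2\le n_1 n$ this produces a single component $S_1$ with $n_1>(1-\epsi')n$ for some $\epsi'\to 0$ as $\delta\to 0$. The cross and small-component contributions are then $o(n^2)$, so the $c_1$-distance graph on $S_1$ has at least $(\tfrac12-\tfrac1{2p}-o(1))n_1^2$ edges. Rescaling $c_1$ to $1$ and applying Theorem~\ref{theorem:udstability} to $S_1$, together with $r\equiv c_1$ on $S_1$, delivers the conclusion, the balanced partition being exactly the one that theorem produces; the fewer than $\epsi n$ points outside $S_1$ go into $T$.

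The main obstacle is the case $p=2$ ($d\in\set{4,5}$), where $\tfrac12-\tfrac1p=0$ and the counting is degenerate: it no longer isolates one large component, and configurations built from many small double-edge components joined by almost-complete \emph{single}-edge bipartite structure are a priori compatible with near-extremality. Here I would use geometry. Call a component large if $n_i\ge\eta n$; there are at most $1/\eta$ of them, and by the controlled within-deficiency (choosing $\eta$ first, then $\delta$ small) each large component is near-extremal for $c_i$-distances, hence by Theorem~\ref{theorem:udstability} a Lenz configuration up to $o(n)$ points. The key geometric input, for the circles when $d=4$ and analogously for the $2$-sphere when $d=5$, is that a point equidistant from an entire circle lies on its axis, while a point off the axis lies at two prescribed distances from only boundedly many points of the circle (a chord-length sinusoid takes each value at most twice). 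If two large components $S_i,S_j$ had almost-complete cross structure, then for almost every $\vy\in S_j$ almost all of each circle of $S_i$ would lie at distance $c_i$ or $c_j$ from $\vy$, forcing $\vy$ onto the axis locus of $S_i$'s pieces, which meets the circles of $S_j$ in $O(1)$ points, contradicting $n_j\ge\eta n$. Hence there is at most one large component $S_1$.

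It then remains to bound $s:=n-n_1$. With $S_1$ a near-Lenz configuration carrying $r\equiv\lambda$, a directed-degree count finishes the job: a point $\vect{z}\notin S_1$ receives only $O(1)$ edges from $S_1$ (being at distance exactly $\lambda$ from boundedly many of its points, unless $\vect{z}$ lies on an axis at the height that places it on the complementary circle, i.e.\ inside $S_1$), and sends at most $\tfrac12 n_1+O(1)$ edges into $S_1$; combining this with $e_r(S_1)\le 2u_d(n_1)$ and $e_r(S\setminus S_1)\le f_d(s)$ gives $e_r(S)\le \tfrac12 n^2-\tfrac12 s(n-s)+o(n^2)$. Since $n-s=n_1\ge\eta n$, near-extremality forces $s\le(2\delta/\eta)n+o(n)<\epsi n$, so $T=(S\setminus S_1)$ together with the $o(n)$ exceptional points of $S_1$ has fewer than $\epsi n$ points, $S\setminus T$ is a Lenz configuration with $r\equiv\lambda$, and the balance of the parts again comes from Theorem~\ref{theorem:udstability}. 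The delicate points are the order of the quantifiers ($\eta$, then $\delta$) and making the ``almost all'' bookkeeping in the geometric step uniform; the odd case $d=5$ is the most demanding, since the axis locus of the $2$-sphere is higher-dimensional and its degenerate intersections with the pieces of $S_j$ must be ruled out separately.
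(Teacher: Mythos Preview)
For $d\geq 6$ your outline coincides with the paper's: the coefficient $\tfrac12-\tfrac1p>0$ forces one double-edge component to carry almost all of $S$, and Theorem~\ref{theorem:udstability} finishes. The differences, and the gaps, are all in the case $p=2$.

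\medskip
\textbf{Gap 1: existence of a large component.} You correctly identify the degenerate counting when $p=2$ and explicitly flag the danger of ``many small double-edge components joined by almost-complete single-edge structure''. But you never rule it out. Your argument for ``at most one large component'' compares two large components; it says nothing if there are none. And your final estimate uses $n_1\ge\eta n$ as an input (``Since $n-s=n_1\ge\eta n$\dots''), so the whole endgame presupposes a large component. The paper supplies the missing step with an external ingredient: by Lemmas~4 and~5 of Avis--Erd\H{o}s--Pach \cite{AEP}, no orientation of $K_4(p_0)$ (for a suitable constant $p_0$) embeds in a favourite distance digraph in $\bR^5$, so the single-edge graph $G_r^1(S)$ is $K_4(p_0)$-free and the Erd\H{o}s--Stone theorem caps its size at $(1/3+\delta)n^2$. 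That forces $\sum_i\alpha_i^2>1/3-6\delta$ and hence some $\alpha_1>1/4$. Without this (or an equivalent device), your scheme does not get off the ground when $p=2$.

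\medskip
\textbf{Gap 2: the degree count in dimension $5$.} Your claim that a point $\vect{z}\notin S_1$ ``receives only $O(1)$ edges from $S_1$'' is false when $d=5$: if $\vect{z}$ has nonzero $V_1$-component, the points of the $2$-sphere $\Sigma_1$ at distance $\lambda$ from $\vect{z}$ form a circle, which may contain many points of $S_1\cap\Sigma_1$; the same obstruction inflates the out-degree beyond $\tfrac12 n_1+O(1)$. The paper deals with this by introducing \emph{rich circles} on $\Sigma_1$ (those carrying at least $n_1/8$ points of $S_1$), of which there are at most four; the exceptional points of $S\setminus S_1$ equidistant to some rich circle lie on at most four lines (a set $Y$), and for every remaining $\vect{z}\in X=S\setminus(S_1\cup Y)$ one gets the usable bound $e_r(S_1,\vect{z})+e_r(\vect{z},S_1)<(3/4+\epsi)n_1$. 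Your closing remark that $d=5$ ``is the most demanding'' is accurate, but the mechanism you sketch does not survive the sphere.
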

By applying Theorem~\ref{theorem:udstability}, the above theorem is relatively easy to prove for $d\geq 6$, but surprisingly, takes some work in the cases $d\in\set{4,5}$.
This is not so much because the Lenz construction is slightly more complicated in dimensions $4$ and $5$, but rather due to certain complications in the extremal theory of digraphs not shared by the extremal theory of ordinary graphs \cite{BS02}.

\section{Proof of Theorem~\ref{theorem:4dstability}}\label{proof1}
Let $d\geq 4$ and $\epsi>0$ be given.
Without loss of generality $\epsi<\frac{1}{20}$.
Let $p=\lfloor d/2\rfloor$.
We take $\delta>0$ to be sufficiently small depending only on $\epsi$ and $d$.
In particular, we need 
\begin{itemize}
\item $\delta < \epsi^2/144$ and,
\item after an application of stability for unit distances (Theorem~\ref{theorem:udstability}), we may also assume that $\delta$ has been chosen so that for some $N\in\bN$ (which we now fix), for any $S\subset\bR^d$ with $\card{S}=n\geq N$, if $u(S)>\frac{1}{2}\left(1-\frac{1}{p}-32\delta\right)n^2$, then for some $T\subseteq S$ of size $\card{T}<\epsi n/3$, $S\setminus T$ is a Lenz configuration such that the number of elements in each part of the associated partition is in the interval $((1/p-\epsi/3)n,(1/p+\epsi/3)n)$.
\end{itemize}
We also take $n_0\in\bN$ sufficiently large depending only on $\epsi$, $d$ and $\delta$, as follows.
We need
\begin{itemize}
\item $n_0>9/\delta$ and $n_0\geq 4N$,
\item $n_0$ to be sufficiently large such that for all $n\geq n_0$, $f_3(n-2)+4n<\left(\frac{1}{2}-\delta\right)n^2$ (by Avis-Erd\H{o}s-Pach \cite{AEP} $f_3(n)=\frac{n^2}{4}+O(n^{2-c})$; in \cite{Sw-extremal3d} we show $f_3(n)\leq \frac{n^2}{4}+\frac{5n}{2}+6$ for sufficiently large $n$), 
\item $n_0>(2c_2/\delta)^{3/2}d^{-2}$, where $c_2$ is the constant from Theorems~\ref{cor:udextremal} and \ref{theorem:asymptotics},
\item $n_0$ to be sufficiently large such that for all $n\geq \epsi n_0/4$, $f_5(n)<\left(\frac{1}{2}+\delta\right)n^2$ (Theorem~\ref{theorem:EPasymptotics}), and
\item $n_0$ to be sufficiently large such that the Erd\H{o}s-Stone theorem guarantees that any graph on $n\geq n_0$ vertices and at least $\left(\frac{1}{3}+\delta\right)n^2$ edges contains a $K_4(p_0)$, where $p_0$ is a constant such that no orientation of $K_4(p_0)$ can be a subgraph of a favourite distance digraph in $\bR^5$ (Lemmas~4 and 5 in \cite{AEP}).
\end{itemize}
Choose $S\subset\bR^d$ with $\card{S}=n\geq n_0$ and $r\colon S\to(0,\infty)$ such that $e_r(S)>\left(1-\frac{1}{p}-\delta\right)n^2$.
We continue with the notation established in the proof of Theorem~\ref{theorem:asymptotics}.
Thus let $G^1_r(S)$ be the graph of single edges and $G^2_r(S)$ the graph of double edges of $\dG_r(S)$ with connected components $G^2_r[S_i]$ ($i\in[k]$).
As before, $n_i=\card{S_i}$.
Also write \[d_{i,j}:=\frac{e_r(S_i,S_j)}{n_in_j}\] for distinct $i,j\in[k]$
and $\alpha_i:=n_i/n$.
Similar to the calculation in the proof of Theorem~\ref{theorem:asymptotics},
\begin{align*}
e_r(S) & =\sum_{i=1}^k e_r(S_i)+\sum_{\set{i,j}\in\binom{[k]}{2}}(e_r(S_i,S_j)+e_r(S_j,S_i))\\
&\leq \sum_{i=1}^k 2u_d(n_i) +\sum_{\set{i,j}\in\binom{[k]}{2}}(d_{i,j}+d_{j,i})\alpha_i\alpha_jn^2\\
&\leq \sum_{i=1}^k \left(\left(1-\frac{1}{p}\right)(\alpha_i n)^2 + 2c_2\left(\frac{\alpha_i n}{d}\right)^{4/3}\right) +\sum_{\set{i,j}\in\binom{[k]}{2}}(d_{i,j}+d_{j,i})\alpha_i\alpha_jn^2\text{.}
\end{align*}
It is given that $e_r(S)>\left(1-\frac{1}{p}-\delta\right)n^2$.
Therefore,
\begin{align}
1-\frac{1}{p}-\delta &< \sum_{i=1}^k \left(1-\frac{1}{p}\right)\alpha_i^2 + 2c_2d^{-4/3}n^{-2/3}\sum_{i=1}^k\alpha_i^{4/3} + \sum_{\set{i,j}\in\binom{[k]}{2}}(d_{i,j}+d_{j,i})\alpha_i\alpha_j \notag \\
&\leq \left(1-\frac{1}{p}\right)\sum_{i=1}^k\alpha_i^2 + \delta\left(\sum_{i=1}^k\alpha_i\right)^{4/3} + \sum_{\set{i,j}\in\binom{[k]}{2}}(d_{i,j}+d_{j,i})\alpha_i\alpha_j \label{1}\\
& \qquad\qquad\qquad\text{(since $n$ is sufficiently large and using \eqref{alphaineq})}\notag\\
& = 1-\frac{1}{p} + \delta - \sum_{\set{i,j}\in\binom{[k]}{2}}\left(2\left(1-\frac{1}{p}\right)-d_{i,j}-d_{j,i}\right)\alpha_i \alpha_j\text{.} \notag
\end{align}
Therefore,
\begin{equation}\label{2}
\sum_{\set{i,j}\in\binom{[k]}{2}}\left(2\left(1-\frac{1}{p}\right)-d_{i,j}-d_{j,i}\right)\alpha_i \alpha_j < 2\delta\text{.}
\end{equation}
As noted in the proof of Theorem~\ref{theorem:asymptotics}, there are no double edges between $S_i$ and $S_j$ when $i\neq j$.
Consequently, $d_{i,j}+d_{j,i}\leq 1$, and therefore,
\[ \sum_{\set{i,j}\in\binom{[k]}{2}}\left(1-\frac{2}{p}\right)\alpha_i\alpha_j < 2\delta\text{.}\]
Assume for the moment that $d\geq 6$.
Then $p\geq 3$, hence $\sum_{\set{i,j}}\alpha_i\alpha_j<6\delta$.
Substituting back into \eqref{1} we obtain
\[ 1 - \frac{1}{p}-\delta < \left(1-\frac{1}{p}\right)\sum_{i=1}^k\alpha_i^2 + \delta+ 6\delta\text{,}\]
which gives
\[ \sum_{i=1}^k\alpha_i^2 > \frac{1-\frac{1}{p}-8\delta}{1-\frac{1}{p}}\geq 1-12\delta\text{.}\]
Since $\sum_{i=1}^k\alpha_i=1$, it follows that $\alpha_i>1-12\delta$ for some $i\in[k]$. 
Without loss of generality, $\alpha_1>1-12\delta$.
Calculating again,
\begin{align}
\left(1-\frac{1}{p}-\delta\right)n^2 &< e_r(S) = e_r(S_1)+\sum_{i=2}^ke_r(S_i)+\sum_{\set{i,j}\in\binom{[k]}{2}}\alpha_i\alpha_j n^2\label{recalc}\\
&< e_r(S_1)+\sum_{i=2}^n(\alpha_i n)^2+6\delta n^2\notag\\
&\leq e_r(S_1)+\left(\sum_{i=2}^n\alpha_i\right)^2n^2+6\delta n^2\quad\text{(by \eqref{alphaineq})}\notag\\
&< e_r(S_1)+(12\delta)^2n^2+6\delta n^2\text{,}\notag
\end{align}
and assuming after scaling that $\left.r\right|_{S_1}\equiv 1$, we obtain
\begin{equation*}
2u(S) \geq e_r(S_1) > \left(1-\frac{1}{p}-7\delta-(12\delta)^2\right)n^2
> \left(1-\frac{1}{p}-32\delta\right)n^2\text{.}
\end{equation*}
By the choice of $\delta$ and $n_0$, the proof is concluded by an application of Theorem~\ref{theorem:udstability}.
This establishes the theorem for all dimensions $d\geq 6$.

The remaining cases are $d=4$ and $d=5$.
The $4$-dimensional case of the theorem is implied by the $5$-dimensional case.
In fact, the theorem for $d=5$ implies that when $S\subset\bR^5$ is contained in an affine hyperplane $H$, then for some $T\subseteq S$ with $\card{T}<\epsi \card{S}$, $S\setminus T$ is the intersection of a $5$-dimensional Lenz configuration with $H$.
Such an intersection is clearly either a $4$-dimensional Lenz configuration, or becomes $3$-dimensional after removing at most $2$ points.
In the latter case
\begin{equation*}
e_r(S) \leq f_3(n-2) + 2(n-2)+2(n-1) < \left(\frac{1}{2}-\delta\right)n^2
\end{equation*}
by choice of $n_0$.
Thus the former case necessarily occurs.

For the remainder of the proof assume that $d=5$.
Then $p=2$ and \eqref{2} can now be written as
\[
\sum_{\set{i,j}\in\binom{[k]}{2}}\alpha_i \alpha_j < \sum_{\set{i,j}\in\binom{[k]}{2}}(d_{i,j}+d_{j,i})\alpha_i\alpha_j +2\delta\text{.}
\]
Thus the graph of single edges $G_r^1(S)$ is almost the complete $k$-partite graph with classes $S_1,\dots,S_k$.
We next apply the Erd\H{o}s-Stone theorem to show that one of the $S_i$ is large in the sense that $\card{S_i}=\Omega(n)$.
(We have no control over $k$ yet, and have to eliminate the possibility that $k$ is large with each $S_i$ small, which would imply that $\dG_r(S)$ is close to a tournament---a case which would be difficult to handle geometrically).
Since $n_0$ and $p_0$ were chosen so that $G_r^1(S)$ does not contain a copy of $K_4(p_0)$, the Erd\H{o}s-Stone theorem gives for sufficiently large $n$ that
\[
\left(\frac{1}{3}+\delta\right)n^2 > \card{E(G_r^1)} = \sum_{\set{i,j}\in\binom{[k]}{2}}(d_{i,j}+d_{j,i})\alpha_i \alpha_j n^2\text{.}
\]
Therefore, $\sum_{\set{i,j}}\alpha_i \alpha_j < \frac{1}{3}+3\delta$, and
\[\sum_{i=1}^k\alpha_i^2 = \left(\sum_{i=1}^k\alpha_i\right)^2-2\sum_{\set{i,j}\in\binom{[k]}{2}}\alpha_i \alpha_j > 1 - 2\left(\frac{1}{3}+3\delta\right)=\frac{1}{3}-6\delta\text{.}\]
It follows that for some $i\in[k]$, $\alpha_i>\frac{1}{3}-6\delta$.
Without loss of generality, $\alpha_1>\frac{1}{3}-6\delta>\frac{1}{4}$.
Thus $\card{S_1}>n/4$.
This enables us to show next that $S_1$ is almost a Lenz configuration.
Suppose to the contrary that $e_r(S_1)\leq \left(\frac{1}{2}-32\delta\right)(\alpha_1 n)^2$.
Starting off as in \eqref{recalc}, an application of Theorem~\ref{cor:udextremal} now gives the following:
\begin{align*}
\left(\frac{1}{2}-\delta\right)n^2 &< e_r(S) = e_r(S_1)+\sum_{i=2}^ke_r(S_i)+\sum_{\set{i,j}\in\binom{[k]}{2}}\alpha_i\alpha_j n^2\\
&< \left(\frac{1}{2}-32\delta\right)(\alpha_1n)^2 + \sum_{i=2}^k\left(\frac{1}{2}(\alpha_i n)^2 + 2c_2\left(\frac{\alpha_i n}{5}\right)^{4/3}\right)\\
&\qquad +\sum_{\set{i,j}\in\binom{[k]}{2}}\alpha_i\alpha_j n^2\text{.}
\end{align*}
It follows that
\begin{align*}
\frac{1}{2}-\delta &< \frac{1}{2}\sum_{i=1}^k\alpha_i^2 - 32\delta\alpha_1^2 + \sum_{i=2}^k \frac{2c_2}{5^{4/3}n^{2/3}}\alpha_i^{4/3}+\sum_{\set{i,j}\in\binom{[k]}{2}}\alpha_i\alpha_j\\
&< \frac{1}{2}\sum_{i=1}^k\alpha_i^2 - 32\delta\alpha_1^2 + \delta\sum_{i=2}^k\alpha_i^{4/3} +\sum_{\set{i,j}\in\binom{[k]}{2}}\alpha_i\alpha_j\quad\text{for $n$ sufficiently large}\\
&= \frac{1}{2}  - 32\delta\alpha_1^2 + \delta\sum_{i=2}^k\alpha_i^{4/3} < \frac{1}{2}  - 32\delta\alpha_1^2 + \delta\left(\sum_{i=2}^k\alpha_i\right)^{4/3}\quad\text{by \eqref{alphaineq}}\\
&< \frac{1}{2} -32\delta\alpha_1^2+\delta < \frac{1}{2}-\delta
\end{align*}
since $\alpha_1>1/4$.
This contradiction gives (after scaling so that $\left.r\right|_{S_1}\equiv1$) that
\[ 2u(S_1) = e_r(S_1)>\left(\frac{1}{2}-32\delta\right)(\alpha_1 n)^2\text{.}\]
Since $\card{S_1}=\alpha_1 n>n_0/4\geq N$ and by the choice of $\delta$ and $n_0$, Theorem~\ref{theorem:udstability} gives a $T\subset S_1$ with $\card{T}<\epsi\card{S_1}/3\leq\epsi n/3$ such that $S_1\setminus T$ is a Lenz configuration for the distance $1$.
Thus we may write $\bR^5=V_1\oplus V_2$ with $\dim V_1=3$ and $\dim V_2=2$ such that $S_1\setminus T\subset \Sigma_1\cup C_2$, where $\Sigma_1$ is a $2$-sphere in $V_1$ with centre $\vo$ and radius $r_1$, and $C_2$ is a circle in $V_2$ with centre $\vo$ and radius $r_2$, where $r_1^2+r_2^2=1$.
After possibly replacing $T$ by a subset, we may assume without loss of generality that $T\cap(\Sigma_1\cup C_2)=\emptyset$.
Also then $\card{S_1\cap\Sigma_1},\card{S_1\cap C_2}<(\frac{1}{2}+\frac{2\epsi}{3})\card{S_1}$.
Since $S_1\setminus T$ is a Lenz configuration and $\card{T}<\epsi n/3$, the proof would be finished if we can show that $\card{S\setminus S_1}<2\epsi n/3$.

To this end we will partition $S\setminus S_1$ into two parts $X\cup Y$, and estimate $e_r(S)$ from above by breaking it up as follows:
\begin{align}
e_r(S) = e_r(S_1) &+ e_r(X) + e_r(S_1,X) + e_r(X,S_1)\notag \\
+& e_r(Y) + e_r(S_1\cup X,Y)+e_r(Y,S_1\cup X)\text{.} \label{sestimate}
\end{align}
Write $n_1:=\card{S_1}$.
To define $Y$ we introduce the following notion.
A circle $C$ on $\Sigma_1$ is said to be \define{rich} if $\card{S_1\cap C}\geq n_1/8$.
If there are at least $5$ rich circles on $\Sigma_1$, inclusion-exclusion gives (since two circles intersect in at most two points) that
\[\left(\frac{1}{2}+\frac{2\epsi}{3}\right)n_1>\card{S_1\cap\Sigma_1}\geq 5\frac{n_1}{8}-2\binom{5}{2}\text{,}\]
which leads to a contradiction for sufficiently large $n_1>n/4$.

Therefore, there are at most $4$ rich circles on $\Sigma_1$.
Let $Y$ be the set of all points in $(S\setminus S_1)\cap V_1$ that are equidistant to some rich circle.
Let $X:=S\setminus(S_1\cup Y)$.
Write $x:=\card{X}$ and $y:=\card{Y}$.
Note that $Y$ can be covered by $4$ lines, since the points in the $3$-dimensional $V_1$ that are equidistant to some circle all lie on a line.
Since any point is equidistant to at most $2$ points on a line, we have $e_r(\vx,Y)\leq 8$ for all $\vx\in S$, hence
\begin{align}\label{bestimate}
e_r(Y)+e_r(S_1\cup X, Y)+e_r(Y,S_1\cup X)&\leq 8y+8(n_1+x)+y(n_1+x)\notag\\
&=8n+yn_1+yx\text{.}
\end{align}
To bound $e_r(S_1,X)+e_r(X,S_1)$ from above, we estimate $e_r(S_1,\vx)+e_r(\vx,S_1)$ for $\vx\in X$.
If $r(\vx)=1$ then $e_r(S_1,\vx)+e_r(\vx,S_1)=0$ since $S_1$ is the vertex set of a connected component of the graph $G_r^2(S)$ of double edges and $\vx\notin S_1$.
Thus we may assume without loss of generality that $r(\vx)\neq 1$.

If $e_r(S_1\cap C_2,\vx)+e_r(\vx,S_1\cap C_2)\leq 4$, then
\begin{align*}
&\phantom{{}={}} e_r(S_1,\vx)+e_r(\vx,S_1)\\
&= e_r(S_1\cap C_2,\vx)+e_r(S_1\setminus C_2,\vx)+e_r(\vx,S_1\cap C_2)+e_r(\vx,S_1\setminus C_2)\\
&\leq 4+e_r(S_1\setminus C_2,\vx)+e_r(\vx,S_1\setminus C_2)\\
&\leq 4+\card{S_1\setminus C_2}=4+\card{S_1\cap\Sigma_2}+\card{T}\\
&< 4+\left(\frac{1}{2}+\frac{2\epsi}{3}\right)n_1 + \frac{\epsi}{3}n_1 = 4+\left(\frac{1}{2}+\epsi\right)n_1\\
&< \left(\frac{3}{4}+\epsi\right)n_1\quad\text{for $n_1>n/4$ sufficiently large.}
\end{align*}
Otherwise $e_r(S_1\cap C_2,\vx)+e_r(\vx,S_1\cap C_2)\geq 5$, and then either $e_r(S_1\cap C_2,\vx)\geq 3$ or $e_r(\vx,S_1\cap C_2)\geq 3$.
In both cases $\vx$ is equidistant to $C_2$, which implies that $\vx\in V_1$.
Also,
\[e_r(S_1\cap C_2,\vx)+e_r(\vx,S_1\cap C_2)=\card{S_1\cap C_2}<\left(\frac{1}{2}+\frac{2\epsi}{3}\right)n_1\text{.}\]
If $\vx\neq\vo$, then the points on $\Sigma_1$ at distance $1$ to $\vx$ lie on a circle.
Since $\vx\notin Y$, this circle is not rich, giving $e_r(S_1\cap\Sigma_1,\vx)<n_1/8$.
Similarly, $e_r(\vx,S_1\cap\Sigma_1)<n_1/8$.
Putting everything together, we obtain
\begin{align*}
&\phantom{{}={}} e_r(S_1,\vx)+e_r(\vx,S_1)\\
&= e_r(S_1\cap C_2,\vx)+e_r(\vx,S_1\cap C_2)+e_r(S_1\cap\Sigma_1,\vx)+e_r(\vx,S_1\cap\Sigma_1)\\
&\qquad+e_r(T,\vx)+e_r(\vx,T)\\
&< \left(\frac{1}{2}+\frac{2\epsi}{3}\right)n_1 + \frac{n_1}{8} + \frac{n_1}{8} + \frac{\epsi}{3}n_1\\
&=\left(\frac{3}{4}+\epsi\right)n_1\text{.}
\end{align*}
We have shown that for all $\vx\in X\setminus\set{\vo}$,
\[e_r(S_1,\vx)+e_r(\vx,S_1)<\left(\frac{3}{4}+\epsi\right)n_1\text{.}\]
Also, if $\vo\in X$ then $e_r(S_1,\vo)+e_r(\vo,S_1)\leq\card{S_1}=n_1$.
Sum over over all $\vx\in X$ to obtain
\begin{equation}\label{cestimate}
e_r(S_1,X)+e_r(X,S_1)<\left(\frac{3}{4}+\epsi\right)n_1x+\frac{1}{4}n_1\text{.}
\end{equation}
Since $n_1>n/4$ is sufficiently large,
\begin{equation}\label{destimate}
e_r(S_1)<\left(\frac{1}{2}+\delta\right)n_1^2\text{.}
\end{equation}
Recall that we want to show that $\card{S\setminus S_1}=\card{X\cup Y}<2\epsi n/3$.
Suppose that $x\geq \epsi n/4$.
Since $n$ is sufficiently large,
\begin{equation}\label{eestimate}
e_r(X)<\left(\frac{1}{2}+\delta\right)x^2\text{.}
\end{equation}
Substitute \eqref{bestimate}, \eqref{cestimate}, \eqref{destimate} and \eqref{eestimate} into \eqref{sestimate} to obtain
\begin{align*}
\left(\frac{1}{2}-\delta\right)n^2 < e_r(S)&< \left(\frac{1}{2}+\delta\right)n_1^2 + \left(\frac{1}{2}+\delta\right)x^2 + \left(\frac{3}{4}+\epsi\right)n_1x+\frac{1}{4}n_1\\
&\phantom{{}={}} +8n +yn_1 + yx\\ 
&= \left(\frac{1}{2}+\delta\right)(n_1+x+y)^2 - \left(\frac{1}{2}+\delta\right)y^2 - 2\delta yn_1  - 2\delta yx\\
&\phantom{{}={}} - \left(\frac{1}{4}+2\delta-\epsi\right)n_1x + 8n + \frac{1}{4}n_1\\
&< \left(\frac{1}{2}+\delta\right)n^2 - \left(\frac{1}{2}+\delta\right)y^2 - \left(\frac{1}{4}-\frac{1}{20}\right)\frac{n}{4}\cdot\frac{\epsi n}{4}+ 9n\text{.}\\
\intertext{It follows that}
\frac{\epsi}{80}n^2 &< 2\delta n^2 + 9n < 3\delta n^2
\end{align*}
for $n$ sufficiently large, hence $\delta > \epsi/240$, a contradiction.
Therefore, $x < \epsi n/4$.
Now substitute \eqref{bestimate}, \eqref{cestimate}, \eqref{destimate} and the trivial $e_r(X)<x^2$ into \eqref{sestimate} to obtain
\begin{align*}
\left(\frac{1}{2}-\delta\right)n^2 < e_r(S)&< \left(\frac{1}{2}+\delta\right)n_1^2 + x^2 + \left(\frac{3}{4}+\epsi\right)n_1x+\frac{1}{4}n_1\\
&\phantom{{}={}} +8n +yn_1 + yx\\ 
&= \left(\frac{1}{2}+\delta\right)(n_1+x+y)^2 + \left(\frac{1}{2}-\delta\right)x^2 - \left(\frac{1}{2}+\delta\right)y^2 \\
&\phantom{{}={}} - 2\delta yn_1  - 2\delta yx- \left(\frac{1}{4}+2\delta-\epsi\right)n_1x + 8n + \frac{1}{4}n_1\\
&< \left(\frac{1}{2}+\delta\right)n^2 +\left(\frac{1}{2}-\delta\right)x^2 - \left(\frac{1}{2}+\delta\right)y^2 + 9n\text{.}\\
\intertext{from which it follows that}
\left(\frac{1}{2}+\delta\right)y^2 &< 2\delta n^2 + \left(\frac{1}{2}-\delta\right)x^2 +\delta n^2 < 3\delta n^2 + \frac{1}{2}\left(\frac{\epsi n}{4}\right)^2\\
\intertext{for $n$ sufficiently large, and}
y^2 &< \left(\left(\frac{\epsi}{4}\right)^2+6\delta\right)n^2 < \left(\frac{\epsi}{3}\right)^2n^2\text{.}
\end{align*}
Thus $y<\epsi n/3$, and it follows that $\card{X\cup Y}=x+y<\epsi n/4 +\epsi n/3$, which finishes the proof of Theorem~\ref{theorem:4dstability}.
\qed

\section{Proof of Theorem~\ref{theorem:higherd}}\label{proof2}
By Theorem~\ref{theorem:4dstability}, extremal favourite distance digraphs are unit distance graphs after scaling and up to an exceptional set $S_0$ of $o(n)$ points.
Similarly, by removing a set $S_0$ of $o(n)$ points from an extremal furthest neighbour digraph, we obtain a maximum distance graph.
(Note that none of the furthest distances change when restricted to the Lenz configuration $S\setminus S_0$.)

Our goal is to show that there are in fact no exceptional points in an extremal configuration, that is, that $\left.r\right|_{S_0}\equiv 1$.
We do this by some careful counting.
In particular, we need to understand how quickly the functions $u_d(n)$ and $M_d(n)$ grow, that is, we need lower bounds for $u_d(n)-u_d(n-k)$ and $M_d(n)-M_d(n-k)$ where $k$ is small.
The exact values of $u_d(n)$ and $M_d(n)$ are known for all sufficiently large $n$ depending on $d$, except in the case of $u_d(n)$ for odd $d\geq 5$.
Thus in these cases we may simply calculate.
Where we don't know the exact values, we have to use our knowledge of the structure of extremal unit distance and diameter graphs (Theorem~\ref{theorem:udextremal}).

In the next two lemmas we state the values for $M_d(n)$ as well as $u_4(n)$.
(The exact values of $u_d(n)$ for even $d\geq 6$ can be found in \cite{Sw09}.)
Here $t_p(n)$ denotes the number of edges of a \emph{Tur\'an $p$-partite graph} on $n$ vertices, that is, of a complete $p$-partite graph with the $n$ vertices divided into $p$ parts as equally as possible.
\begin{lemma}[Brass \cite{Brass97}, Van Wamelen \cite{VanWamelen}]\label{exactvalues1}
For all $n\geq 5$,
\begin{align*}
u_4(n) &= \begin{cases}
t_2(n) + n & \text{if $n$ is divisible by $8$ or $10$,}\\
t_2(n) + n - 1 & \text{otherwise.}
\end{cases}
\end{align*}
\end{lemma}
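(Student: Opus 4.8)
The plan is to reduce to the structure of near‑extremal unit‑distance sets and then run a direct count on the two circles of the Lenz configuration, where the delicate point is a number‑theoretic constraint coupling the two radii. For $n$ large, Theorem~\ref{theorem:udextremal} tells us that an extremal set $S$ (i.e. $u(S)=u_4(n)$) is a Lenz configuration for distance $1$: I would fix an orthogonal decomposition $\bR^4=V_1\oplus V_2$ with both $V_i$ planes, circles $C_i\subset V_i$ of radius $r_i$ with $r_1^2+r_2^2=1$, and a translate $\vv$ with $S\subset\vv+(C_1\cup C_2)$. Writing $n_i:=\card{S\cap(\vv+C_i)}$ and $n=n_1+n_2$, the finite range $5\le n<n_0$ left uncovered by the asymptotic structure theorem would be disposed of separately (this is essentially Van Wamelen's contribution).

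The basic count is as follows. Every point of $\vv+C_1$ is at distance $\sqrt{r_1^2+r_2^2}=1$ from every point of $\vv+C_2$, so the cross pairs contribute exactly $n_1n_2$ unit distances. Within a single circle the unit‑distance graph has maximum degree $2$, since a circle meets a unit circle centred at one of its points in at most two points; hence it has at most $n_i$ edges, with equality exactly when it is $2$‑regular, i.e.\ a disjoint union of cycles. Therefore
\[ u(S)\le n_1n_2+n_1+n_2=n_1n_2+n\le t_2(n)+n, \]
using $n_1n_2\le\lfloor n^2/4\rfloor=t_2(n)$. Thus $t_2(n)+n$ is an a priori upper bound, and the whole problem is to decide precisely when it is attained and, when it is not, that $t_2(n)+n-1$ is.

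The crux is characterising when a circle carries a $2$‑regular unit‑distance graph. On $C_i$ a unit distance corresponds to a fixed central angle $2\theta_i$ with $2r_i\sin\theta_i=1$; an orbit closes into a cycle iff $\theta_i/\pi$ is rational, say $\theta_i=\pi a_i/b_i$ in lowest terms, in which case the points split into $b_i$‑cycles and $b_i\mid n_i$. Substituting $r_i=1/(2\sin\theta_i)$ into $r_1^2+r_2^2=1$ gives
\[ \frac{1}{\sin^2\theta_1}+\frac{1}{\sin^2\theta_2}=4,\qquad \theta_1,\theta_2\in\pi\mathbb{Q}. \]
A Niven‑type argument on the algebraic values of $\sin^2$ at rational multiples of $\pi$, restricted to the feasible range $r_i\in[\tfrac12,\tfrac{\sqrt3}{2}]$ (the chord $1$ must fit, and $r_i^2\le 3/4$), shows that the only solutions are $\theta_1=\theta_2=\pi/4$, giving two squares ($b_i=4$, both radii $1/\sqrt2$), and $\{\theta_1,\theta_2\}=\{\pi/5,2\pi/5\}$, giving a regular pentagon on one circle and a pentagram on the other ($b_i=5$). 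This is where the moduli $8$ and $10$ are forced, and I expect it to be the main obstacle.

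To conclude, attaining $t_2(n)+n$ requires simultaneously $n_1n_2=t_2(n)$ (hence the balanced split $n_1=n_2=n/2$) and both within‑circle graphs $2$‑regular; by the previous paragraph the latter means squares ($4\mid n_i$) or pentagon/pentagram ($5\mid n_i$), and combined with balance this is exactly $8\mid n$ or $10\mid n$, in which cases the explicit square or pentagon/pentagram configuration realises $t_2(n)+n$. In all remaining residue classes one shows $u(S)\le t_2(n)+n-1$: either the split is unbalanced, losing at least one cross pair, or at most one circle can be $2$‑regular, losing at least one within‑circle edge; and one exhibits a matching construction (a balanced or near‑balanced split tiled by squares or cycles with a single edge missing) attaining $t_2(n)+n-1$. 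Besides the Diophantine step, the remaining work is the bookkeeping that the non‑special residues never beat $t_2(n)+n-1$ yet always reach it, together with the hand check of the small range $5\le n<n_0$ where the Lenz structure theorem does not yet apply.
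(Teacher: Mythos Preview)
The paper does not prove this lemma; it is quoted as a known result from \cite{Brass97} and \cite{VanWamelen} and used later only as a black box (in the proof of Lemma~\ref{nklemma}). There is thus no in-paper argument to compare your proposal against.

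That said, your outline is essentially a faithful reconstruction of Brass's original argument: reduce to Lenz configurations via the structure theorem, count cross pairs as $n_1n_2$ and within-circle pairs as at most $n_i$ each, and then classify when both within-circle unit-distance graphs can be $2$-regular. The Diophantine step---showing that $1/\sin^2\theta_1+1/\sin^2\theta_2=4$ with $\theta_i\in\pi\mathbb{Q}$ forces $\{\theta_1,\theta_2\}=\{\pi/4,\pi/4\}$ or $\{\pi/5,2\pi/5\}$---is indeed the heart of the matter; Brass handles it by passing to $\cos 2\theta_i$ and using that $2\cos(2\pi q)$ is an algebraic integer whose degree is controlled by the denominator of $q$, which rules out all but finitely many candidates to be checked. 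You correctly flag this as the main obstacle without carrying it out.

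Two small cautions. First, you invoke Theorem~\ref{theorem:udextremal} for the Lenz structure, but for $d=4$ that theorem is itself from \cite{Brass97}, the same source as the lemma; this is not circular---Brass establishes the structure result first by stability/Tur\'an methods and only then optimises over Lenz configurations---but your write-up should make the logical order explicit rather than appear to assume what is being proved. Second, your feasible range $r_i\in[\tfrac12,\tfrac{\sqrt3}{2}]$ should exclude $r_i=\tfrac12$: there the unit chord is a diameter, the within-circle graph has maximum degree $1$, and $2$-regularity already fails before any number theory is needed.
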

\begin{lemma}[\cite{Sw09}]\label{exactvalues2}
For all sufficiently large $n$ \textup{(}depending on $d$\textup{)},
\begin{align*}
M_4(n) &= \begin{cases}
t_2(n)+\lceil n/2\rceil + 1 & \text{if}\quad n\not\equiv 3\pmod{4},\\
t_2(n)+\lceil n/2\rceil    & \text{if}\quad n\equiv 3\pmod{4};
\end{cases}\\
M_5(n) &= t_2(n) +n;\\
M_d(n) &= t_p(n) + p\quad\text{for even $d\geq 6$, where $p=d/2$;}\\
M_d(n) &= t_p(n) + \lceil n/p\rceil +p-1\quad\text{for odd $d\geq 7$, where $p=\lfloor d/2\rfloor$.}
\end{align*}
\end{lemma}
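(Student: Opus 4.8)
The plan is to reduce the computation of $M_d(n)$ to an optimisation over Lenz configurations and then solve that optimisation exactly. For the upper bound I would invoke Theorem~\ref{theorem:udextremal}: for $n$ beyond a threshold depending on $d$, any extremal set $S$ with $M(S)=M_d(n)$ is a Lenz configuration for the distance $\lambda:=\diam(S)$, with associated partition $S_1,\dots,S_p$ (the sphere part $S_1$ present only when $d$ is odd). The diameter pairs then split cleanly into \emph{cross pairs}, joining distinct parts, and \emph{within pairs}, lying inside a single circle or the sphere. Since any two points on different circles (or on the sphere and a circle) are at distance $\sqrt{r_i^2+r_j^2}=\lambda$ by the Lenz relation, the cross pairs form a complete $p$-partite graph and contribute exactly $\sum_{i<j}n_in_j\le t_p(n)$, with equality iff the parts are balanced. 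Thus $M_d(n)=t_p(n)+(\text{maximum total number of within pairs})$, and everything reduces to counting diameter pairs inside each individual circle and sphere.

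The within-part count is where the dimension-dependence enters. On a circle of radius $r$ the diameter-$\lambda$ subgraph is severely constrained: if $r>\lambda/2$ the whole part must lie in an arc of angular width $2\arcsin(\lambda/2r)<\pi$, so at most the two extreme points realise $\lambda$ and there is at most one within pair; if $r=\lambda/2$ the within pairs are exactly antipodal pairs and form a matching. On the $2$-sphere the situation is genuinely richer, and the key sub-lemma I would prove is a spherical analogue of the planar diameter theorem, bounding the number of $\lambda$-pairs among points of spherical diameter $\lambda$ linearly in the number of points on the sphere. When $d\ge 6$ the Lenz relation forces every radius to equal $\lambda/\sqrt2$, pinning the sphere's angular diameter to $\pi/2$ and each circle to at most one within pair; substituting the balanced partition then yields the clean corrections $+p$ for even $d\ge 6$ and $+\lceil n/p\rceil+(p-1)$ for odd $d\ge 7$.

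The hard cases are $d\in\set{4,5}$, where the Lenz relation leaves the radii free and one must optimise over them as well as over the partition. Here the within-part contributions can be enlarged by choosing the radii appropriately (for instance a radius-$\lambda/2$ circle carries an antipodal matching rather than a single pair, and a suitably sized $2$-sphere carries a full extremal spherical diameter graph), and the precise additive terms---$\lceil n/2\rceil+1$ for $d=4$ and $n$ for $d=5$---emerge only from the combined optimisation. To pin them down I would (i) prove sharp upper bounds for the number of within pairs on a circle and on a sphere \emph{as a function of the radius and of the number of points}, (ii) carry out the joint maximisation of $\sum_{i<j}n_in_j$ plus the within totals over all admissible radii and part sizes, and (iii) exhibit matching constructions for each residue class. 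The parity and divisibility conditions (e.g.\ the $n\equiv 3\pmod 4$ case for $M_4$) should fall out of this integer optimisation together with the requirement that the balancing of the parts be compatible with the discrete within-part maxima.

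I expect the main obstacle to be steps (i)--(ii) for the sphere: determining the exact maximum number of $\lambda$-pairs among $m$ points lying on a $2$-sphere and having spherical diameter $\lambda$, and then trading this quantity off against the quadratic gain from balancing. The circle is one-dimensional and its within-pair count is easy to pin down, but the two-dimensional sphere supports the extremal diameter graphs responsible for the linear corrections in $d=5$ and odd $d\ge 7$, and controlling these precisely---together with the radius optimisation peculiar to $d=4,5$---is the crux. A secondary subtlety is that Theorem~\ref{theorem:udextremal} supplies the Lenz structure only once $n$ exceeds a threshold, so the entire argument is asserted for sufficiently large $n$, exactly as in the statement.
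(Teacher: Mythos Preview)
The paper does not prove this lemma at all: it is quoted verbatim from \cite{Sw09} and used as a black box (notice the citation in the lemma heading and the absence of any proof environment following it). So there is no ``paper's own proof'' to compare against; the relevant comparison is with the argument in \cite{Sw09}.

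Your outline is in fact a faithful sketch of the strategy carried out there. One first uses the structure theorem (Theorem~\ref{theorem:udextremal}, which in \cite{Sw09} is derived from a stability result) to reduce to Lenz configurations, then splits the diameter pairs into cross pairs and within pairs, and finally optimises the sum of the complete $p$-partite count and the within contributions over part sizes and, in dimensions $4$ and $5$, over the free radii. Your identification of the spherical sub-problem as the crux is accurate: the linear corrections in odd dimensions come from a sharp bound on the number of diameter pairs among $m$ points on a $2$-sphere of given radius with prescribed diameter, and in \cite{Sw09} this is handled via the ball-polytope/V\'azsonyi machinery for $3$-dimensional diameter graphs. The circle analysis you give is correct, and the residue phenomena for $d=4$ do emerge from the integer optimisation exactly as you anticipate.

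One point to tighten: your displayed equation $M_d(n)=t_p(n)+(\text{maximum within pairs})$ should initially be an inequality $\le$; it only becomes an equality after you verify that the balanced partition is compatible with the within-part maxima, which is not automatic (for instance in $d=4$ the two circles play asymmetric roles and the optimal partition is not always the Tur\'an one before the within terms are added). You acknowledge this in step~(ii), but the earlier sentence overstates.
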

\begin{lemma}\label{nklemma}
For any $d\geq 4$ there exists $N=N_d\geq 1$ such that for any $n$ and $k$ such that $n>k\geq 1$ and $n-k\geq N$,
\begin{equation}\label{udestimate}
u_d(n)-u_d(n-k)\geq \left(1-\frac{1}{p}\right)k(n-k)\text{,}
\end{equation}
\begin{equation}\label{Mdestimate}
M_d(n)-M_d(n-k)\geq \left(1-\frac{1}{p}\right)k(n-k)\text{,}
\end{equation}
\begin{equation}\label{u5estimate}
u_5(n)-u_5(n-k)\geq \frac{1}{2}k(n-k)+\frac{k^2+2k-1}{4}\text{,}
\end{equation}
and for $n-k\geq N_5$,
\begin{equation}\label{M5estimate}
M_5(n)-M_5(n-k)\geq \frac{1}{2}k(n-k)+\frac{k^2+4k-1}{4}\text{.}
\end{equation}
Also, for $n$ sufficiently large,
\begin{equation}\label{u41estimate}
u_4(n)-u_4(n-1) = \frac{n-1}{2}\quad\text{only if $8\divides n-1$ or $10\divides n-1$.}
\end{equation}
\end{lemma}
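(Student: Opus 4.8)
The plan is to separate the cases where the extremal function has a known closed form from the one genuinely unknown case, $u_5$. For the two main inequalities \eqref{udestimate} and \eqref{Mdestimate} I would argue uniformly by a short construction. Fix an extremal configuration $S_0$ with $\card{S_0}=n-k$ and $u(S_0)=u_d(n-k)$ (respectively $M(S_0)=M_d(n-k)$). Taking $N$ at least as large as the threshold of Theorem~\ref{theorem:udextremal}, $S_0$ is a Lenz configuration with parts $S_1,\dots,S_p$, and by pigeonhole the smallest part has at most $(n-k)/p$ points. Now place all $k$ new points on the circle (or sphere) carrying the smallest part, at positions distinct from the old ones. In a Lenz system any two points lying on different circles/the sphere are automatically at the special distance $\lambda$, since the squared distance equals the sum of the two squared radii; hence each new point forms a unit (respectively diameter) pair with \emph{every} old point outside its own part, i.e.\ with at least $(n-k)(1-1/p)$ of them. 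Summing over the $k$ new points and retaining all old pairs gives $u_d(n)\geq u_d(n-k)+k(n-k)(1-1/p)$, which is \eqref{udestimate}; the identical count yields \eqref{Mdestimate} once one checks that placing the new points inside the arc (or cap) spanned by the old points of that part leaves $\diam=\lambda$ unchanged, the cross-distances remaining exactly $\lambda$ regardless of position.

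The remaining displays involve only $d\in\set{4,5}$, where sharper information is available. For \eqref{M5estimate} and \eqref{u41estimate} the exact values from Lemmas~\ref{exactvalues2} and \ref{exactvalues1} reduce everything to arithmetic with $t_2$ and floor functions. Using $M_5(m)=t_2(m)+m$ (valid for $m\geq N_5$) together with $\lfloor n^2/4\rfloor\geq n^2/4-\tfrac14$ and $\lfloor (n-k)^2/4\rfloor\leq (n-k)^2/4$, the difference $M_5(n)-M_5(n-k)=t_2(n)-t_2(n-k)+k$ is at least $\tfrac{2kn-k^2+4k-1}{4}$, which is exactly the right-hand side of \eqref{M5estimate}. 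For \eqref{u41estimate} I would write $u_4(m)=t_2(m)+m-e(m)$, where $e(m)\in\set{0,1}$ vanishes precisely when $8\divides m$ or $10\divides m$; then $u_4(n)-u_4(n-1)=\lfloor n/2\rfloor+1-(e(n)-e(n-1))$, and a short parity check shows this equals $(n-1)/2$ only when $n$ is odd with $e(n-1)=0$ and $e(n)=1$, i.e.\ only when $8\divides n-1$ or $10\divides n-1$.

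The one statement requiring a genuine construction is \eqref{u5estimate}, since $u_5$ has no known closed form. Here I would again extend an extremal $(n-k)$-point configuration, which by Theorem~\ref{theorem:udextremal} is a Lenz configuration on a $2$-sphere $\Sigma_1$ of radius $r_1$ and a circle $C_2$ of radius $r_2$, with $r_1^2+r_2^2=1$ and balanced parts of sizes $a,b\approx(n-k)/2$. The new feature, compared with \eqref{udestimate}, is that I would split the $k$ new points as evenly as possible between $\Sigma_1$ and $C_2$: writing $s+c=k$, the cross-pairs contribute $sb+ca+sc$, and with $a\approx b$ and $s\approx c$ this already produces the main term $\tfrac12 k(n-k)$ plus a term $\lfloor k^2/4\rfloor$ coming from the \emph{new--new} cross-pairs. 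The residual linear term $\tfrac{2k-1}{4}$ of \eqref{u5estimate} I would recover from \emph{within-part} unit distances: since $r_1^2+r_2^2=1$, at least one of $\Sigma_1,C_2$ has radius $\geq 1/\sqrt2>1/2$ and hence can carry a chain of roughly $k/2$ new unit distances linking the added points to each other and to old points.

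The step I expect to be the main obstacle is exactly this accounting in \eqref{u5estimate}. Two features make it delicate. First, the radii $r_1,r_2$ of the \emph{unknown} optimal $u_5(n-k)$ configuration are not under our control, so the argument must secure the $\approx k/2$ within-part distances on whichever part happens to admit them, while simultaneously keeping the split $s,c$ close enough to $k/2$ to retain the full $\lfloor k^2/4\rfloor$ gain and absorbing any imbalance $\card{a-b}$ in the cross-count. Second, the precise constant $\tfrac{k^2+2k-1}{4}$ forces one to track the parity of $k$ and the floor terms exactly, rather than argue asymptotically. Everything else reduces to the cross-distance count of the first paragraph and the closed-form evaluations of the second, and is routine once those are in place.
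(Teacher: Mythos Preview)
Your plan matches the paper's almost line for line: the constructive augmentation of an extremal Lenz configuration for \eqref{udestimate}, exact-value arithmetic for \eqref{M5estimate} and \eqref{u41estimate}, and a balanced split between $\Sigma_1$ and $C_2$ for \eqref{u5estimate}. One organisational difference: for \eqref{Mdestimate} you argue constructively (adding $k$ points inside the arc/cap of the smallest part so as not to raise the diameter), whereas the paper simply reads the inequality off the closed forms in Lemma~\ref{exactvalues2} via $t_p(n)-t_p(n-k)\geq(1-1/p)k(n-k)$. The paper even remarks that your route is possible but that the diameter-preservation bookkeeping becomes delicate in dimension~$5$; since you only invoke the construction for the weaker bound \eqref{Mdestimate} and switch to exact values for \eqref{M5estimate}, you sidestep that difficulty.

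There is one concrete point in your \eqref{u5estimate} sketch that would not go through as written. You secure the extra $\sim k/2$ within-part unit distances on ``whichever of $\Sigma_1,C_2$ has radius $\geq 1/\sqrt2$''. On the circle $C_2$ this is fragile: a point on a circle has exactly two circle-neighbours at unit distance, so placing $\lfloor k/2\rfloor$ new points each at unit distance to an old or new point forces a rigid rotation sequence that can collide with existing points (and you have no control over $k$ relative to $n$; the hypothesis allows $k$ as large as $n-N$). The paper avoids this by first proving that \emph{specifically} $r_1>1/2$: if $r_1\leq 1/2$ then the sphere carries at most $O(n)$ internal unit pairs, forcing $u(S)\leq t_2(n-k)+O(n)$, which contradicts extremality of $S$. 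Once $r_1>1/2$, the sphere gives a one-parameter family of unit-distance neighbours to any point, so the $\lfloor k/2\rfloor$ new sphere points can each be linked to an old sphere point with no collision issues. For odd $k$ the paper then disposes of the unknown imbalance $\card{S\cap\Sigma_1}-\card{S\cap C_2}$ by computing both splits $((k\mp1)/2,(k\pm1)/2)$ and observing that $\max\geq$ their average, which is exactly the right-hand side of \eqref{u5estimate}; this is the clean way to ``absorb any imbalance $\lvert a-b\rvert$'' that you flagged as the obstacle.
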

\begin{proof}
Since Lemmas~\ref{exactvalues1} and \ref{exactvalues2} provide the exact values of $u_4(n)$ and $M_d(n)$, $d\geq 4$, the inequalities~\eqref{Mdestimate}, \eqref{M5estimate} and \eqref{u41estimate} can be obtained by simple calculations.
We omit the details, except to note that $t_p(n)-t_p(n-k)\geq (1-1/p)k(n-k)$, as can be seen by taking a Tur\'an $p$-partite graph on $n-k$ vertices and adding $k$ new vertices to the smallest class.

We next prove the remaining inequalities \eqref{udestimate} and \eqref{u5estimate}.
Since these all involve $u_d(n)$ for which we do not have exact values when $d\geq 5$ is odd, we give a structural argument.
Consider a set $S$ of $n-k$ points in $\bR^d$ that is extremal with respect to unit distances, that is, $u(S)=u_d(n-k)$.
By Theorem~\ref{theorem:udextremal}, $S$ is a Lenz configuration if $n-k$ is sufficiently large.
In particular, $S$ can be partitioned into $p=\lfloor d/2\rfloor$ parts $S_1,\dots,S_p$ with each part lying on a circle (except if $d$ is odd when $S_1$ lies on a sphere) such that the distance between any two points on different circles (on a circle and the sphere, respectively), equals $1$.
Let $i\in[p]$ be such that $\card{S_i}=\min\set{\card{S_1},\dots,\card{S_p}}$.
Thus $\card{S_i}\leq (n-k)/p$.
Choose a set $T$ of any $k$ points on the circle (or sphere) containing $S_i$ disjoint from $S_i$.
Then $S\cup T$ contains $n$ points and has at least $k\card{S\setminus S_i}\geq k(1-1/p)(n-k)$ additional unit distance pairs.
This establishes \eqref{udestimate}.

Next consider \eqref{u5estimate}.
Here $S\subset\Sigma_1\cup C_2$, where $\Sigma_1$ is a $2$-sphere and $C_2$ a circle, with any point on $\Sigma_1$ and any point on $C_2$ at unit distance.
Now add $k$ new points to $S$ in the following more careful way.
If $k$ is even, add $k/2$ points to each of $\Sigma_1$ and $C_2$.
This creates
\[\frac{k}{2}\card{S\cap\Sigma_1}+\frac{k}{2}\card{S\cap C_2}=k(n-k)/2\]
unit distance pairs from the new points to $S$ and $k^2/4$ unit distance pairs between the new points.
Since $r_1>1/2$ (otherwise $u_d(n-k)=u(S)\leq t_2(n-k)+O(n)$, a contradiction), it is possible to choose each new point on $\Sigma_1$ at unit distance to some point of $S\cap\Sigma_1$.
We obtain a set $S'$ of $n$ points with at least
\[ u_5(n-k)+\frac{1}{2}k(n-k)+\frac{k^2}{4}+\frac{k}{2}\]
unit distance pairs.
Therefore, 
\[ u_5(n)\geq u(S')\geq u_5(n-k)+\frac{1}{2}k(n-k)+\frac{k^2}{4}+\frac{k}{2}\text{,}\]
which proves \eqref{u5estimate} when $k$ is even.
Now let $k$ be odd.
If we place $(k-1)/2$ points on $\Sigma_1$ and $(k+1)/2$ points on $C_2$, this creates as before
\begin{align}
&\phantom{{}={}}\frac{k-1}{2}\card{S\cap C_2}+\frac{k+1}{2}\card{S\cap\Sigma_1}+\frac{k^2-1}{4}+\frac{k-1}{2}\notag\\
&= \frac{1}{2}k(n-k)+\frac{1}{2}(\card{S\cap\Sigma_1}-\card{S\cap C_2})+\frac{k^2-1}{4}+\frac{k-1}{2}\label{odd1}
\end{align}
additional unit distance pairs.
If instead we place $(k+1)/2$ points on $\Sigma_1$ and $(k-1)/2$ points on $C_2$,
the number of additional unit distance pairs created is
\begin{align}
&\phantom{{}={}}\frac{k+1}{2}\card{S\cap C_2}+\frac{k-1}{2}\card{S\cap\Sigma_1}+\frac{k^2-1}{4}+\frac{k+1}{2}\notag\\
&= \frac{1}{2}k(n-k)+\frac{1}{2}(\card{S\cap C_2}-\card{S\cap \Sigma_1})+\frac{k^2-1}{4}+\frac{k+1}{2}\text{.}\label{odd2}
\end{align}

It is always possible to attain at least the average of \eqref{odd1} and \eqref{odd2}, which equals the right-hand side of \eqref{u5estimate}.
\end{proof}

In the above lemma it is tempting to try to prove the inequalities \eqref{Mdestimate} and \eqref{M5estimate} also with the use of Theorem~\ref{theorem:udextremal}.
However, we should then be careful in how we choose the $k$ points to be added to the extremal Lenz configuration on $n-k$ points, so as not to change furthest distances among the original $n-k$ points.
Although this is possible, the case $d=5$ requires a very detailed consideration of the extremal $5$-dimensional diameter graphs as determined in \cite{Sw09}. 
It is much simpler to instead use the estimates from Lemma~\ref{exactvalues2} and calculate.

\bigskip
We can now start with the proof of Theorem~\ref{theorem:higherd}.
Let $d\geq 4$, $p=\lfloor d/2\rfloor$ and let $S\subset\bR^d$ with $\card{S}=n$ and $r\colon S\to(0,\infty)$ determine an extremal favourite distance digraph (or let $S$ determine an extremal furthest neighbour digraph respectively, and then continue to write $r(\vx)=D_S(\vx)$ for $\vx\in S$).

Apply Theorem~\ref{theorem:4dstability}.
Thus if $n$ is sufficiently large depending on $d$, $\bR^d$ has an orthogonal decomposition $V_1\oplus\dots V_p$ with $\dim V_i=2$ (except when $d$ is odd, $\dim V_1=3$) such that after scaling and translation of $S$, there is a Lenz system $(C_1,\dots,C_p)$ for $d$ even, $(\Sigma_1,C_2,\dots,C_p)$ for $d$ odd, and a partition $S_0,S_1,\dots,S_p$ of $S$ with $\card{S_0}=o(n)$, $\card{S_i}=\frac{n}{p}+o(n)$ and $S_i\subset C_i$ where $C_i$ is a circle with centre $o$ and radius $r_i$ in $V_i$ for $i\in[p]$ (except if $d$ is odd and $i=1$, where $S_1\subset\Sigma_1\subset V_1$) such that $r_i^2+r_j^2=1$ for all distinct $i,j$.
Also, $\left. r\right|_{S\setminus S_0}\equiv 1$.

Let $T:=\setbuilder{\vx\in S_0}{r(\vx)\neq 1}$.
If we can show that $T=\emptyset$, then $r\equiv 1$ and $S$ would consequently determine an extremal unit distance graph (extremal diameter graph, respectively), since $2u(S)=e_r(S)\geq 2u_d(n)$ (respectively $2M(S)=e_r(S)\geq 2M_d(n)$).
It would then follow from Theorem~\ref{theorem:udextremal} that $S$ is a Lenz configuration for sufficiently large $n$.
In the exceptional case of favourite distances in dimension $4$, we show instead that if $T\neq\emptyset$ then $T=\set{\vo}$.
As in the proof of Theorem~\ref{theorem:4dstability}, the dimensions $d\geq 6$ are disposed of very quickly, and the case $d=5$ takes the most work.

Write $k:=\card{T}$.
Suppose that $k\neq 0$.
We aim to find a contradiction except in the $4$-dimensional case, where we'll prove that $k=1$ and $T=\{\vo\}$, $r(\vo)=r_1=r_2=1/\sqrt{2}$.

We estimate as follows:
\begin{align}
2u_d(n)\leq e_r(S)&=e_r(S\setminus T)+e_r(S\setminus T,T)+e_r(T,S\setminus T)+e_r(T)\notag\\
&\leq 2u_d(n-k)+e_r(S\setminus T,T)+e_r(T,S\setminus T)+e_r(T)\text{.}\label{bound0}
\end{align}
This, together with \eqref{udestimate} of Lemma~\ref{nklemma} gives
\begin{align*}
2\left(1-\frac{1}{p}\right)k(n-k) &\leq 2u_d(n)-2u_d(n-k)\notag\\
&\leq e_r(S\setminus T,T)+e_r(T,S\setminus T)+e_r(T)\text{.}
\end{align*}
Using instead \eqref{Mdestimate} (for the case of furthest neighbours) gives the same bounds, so in all cases we have
\begin{equation}\label{bound1}
2\left(1-\frac{1}{p}\right)k(n-k)\leq e_r(S\setminus T,T)+e_r(T,S\setminus T)+e_r(T)\text{.}
\end{equation}
Since $r(\vx)\neq 1$ for all $\vx\in T$, $\vx$ is not adjacent to any point from $S\setminus T$ in the graph $G_r^2(S)$ of double edges, so
\begin{equation}\label{bound2}
e_r(S\setminus T,T)+e_r(T,S\setminus T)\leq k(n-k)\text{.}
\end{equation}
Substituting this and the trivial bound $e_r(T)\leq k(k-1)$ into \eqref{bound1} we obtain
\[ 2\left(1-\frac{1}{p}\right)k(n-k) \leq k(n-1)\text{.} \]
Since $k=o(n)$, we obtain a contradiction for sufficiently large $n$ if $p\geq 3$.
This finishes the proof for the cases $d\geq 6$.

\smallskip
Now assume that $d\in\set{4,5}$.
Suppose that for some $x\in T$, $e_r(S_2,\vx)+e_r(\vx,S_2)\leq 4$.
Then we may improve \eqref{bound2} to
\begin{equation*}
e_r(S\setminus T,T)+e_r(T,S\setminus T)\leq k(n-k)-\card{S_2}+4\text{.}
\end{equation*}
Substituting this and $e_r(T)\leq k(k-1)$ into \eqref{bound1} we obtain
$k(n-k)\leq k(n-1)-\card{S_2}+4$, hence $\card{S_2}\leq 4+k(k-1)$.
This contradicts $\card{S_2}=n/2+o(n)$ for $n$ sufficiently large.

Therefore, for all $\vx\in T$ we have $e_r(S_2,\vx)+e_r(\vx,S_2)\geq 5$, which implies either $e_r(S_2,\vx)\geq 3$ or $e_r(\vx,S_2)\geq 3$.
Either case gives that $\vx$ is equidistant to the circle $C_2$.
Therefore, $x\in V_1$.

We have shown that $T\subset V_1$.

\smallskip
We can now finish the case $d=4$.
Symmetry gives that $T\subset V_2$ as well, hence $T=\set{\vo}$.
Therefore, $\vo$ must have the same distance $r(\vo)$ to $C_1$ and $C_2$, and it follows that $r(\vo)=r_1=r_2=1/\sqrt{2}$ and $e_r(S\setminus\set{\vo},\vo)+e_r(\vo,S\setminus\set{\vo})=n-1$.
In the favourite distance case we obtain from \eqref{bound0} that
$u_4(n)-u_4(n-1)\leq(n-1)/2$.
Combined with \eqref{udestimate} of Lemma~\ref{nklemma}, we obtain that equality holds, hence $8\divides n-1$ or $10\divides n-1$ by \eqref{u41estimate}, and $S\setminus\set{\vo}$ is an extremal unit distance configuration.
Inspection of the extremal configurations \cite{Brass97, VanWamelen} shows that when $8\notdivides n-1$ and $10\divides n-1$, the two circles are necessarily of different radii.
In our case we must therefore have $8\divides n-1$.
Then the extremal unit distance configurations on $n-1$ points are formed by the vertices of $(n-1)/8$ unit squares inscribed in each $C_i$ \cite{Brass97}.

In the furthest neigbour case, we obtain similarly as above that $M_4(n)-M_4(n-1)\leq(n-1)/2$.
Again, by \eqref{Mdestimate} equality holds and $S\setminus\set{\vo}$ is an extremal diameter configuration.
However, it is easily seen that when $r_1=r_2=1/\sqrt{2}$, the maximum number of diameter pairs in a set of $n$ points in $C_1\cup C_2$ is $t_2(n)+2$, which contradicts Lemma~\ref{exactvalues2} for sufficiently large $n$.
This finishes the proof for the case $d=4$.

\smallskip
Now consider the case $d=5$.
Suppose that $e_r(S_1,\vx)+e_r(\vx,S_1)<n/3$ for some $\vx\in T$.
Then we may improve \eqref{bound2} to
\begin{equation*}
e_r(S\setminus T,T)+e_r(T,S\setminus T) < k(n-k)-\card{S_1}+\frac{n}{3}\text{,}
\end{equation*}
which, when substituted together with $e_r(T)\leq k(k-1)$ into \eqref{bound1}, gives
$k(n-k) < k(n-1)-\card{S_1}+n/3$ and subsequently, $\card{S_1} < n/3 +k(k-1)$, which contradicts $\card{S_1}=n/2+o(n)$ for $n$ sufficiently large.

Therefore, for all $\vx\in T$ we have $e_r(S_1,\vx)+e_r(\vx,S_1)\geq n/3$.
This will enable us to show that $T$ lies on a straight line through the origin.
Suppose then that for some two $\vx,\vx'\in T\setminus\set{\vo}$, the lines $\vo\vx$ and $\vo\vx'$ are not parallel.
Then at least $n/3$ points of $S_1$ lie on two circles that are both normal to $\vo\vx$, and similarly, at least $n/3$ points of $S_1$ lie on two circles normal to $\vo\vx'$.
Since the intersection of these two unions of circles contains at most $8$ points, we obtain
\[n/2+o(n)=\card{S_1}\geq 2\cdot\frac{n}{3}-8\text{,}\]
a contradiction for sufficiently large $n$.

It follows that $T$ lies on a line $\ell$, say, through the origin.
Since there are at most $2$ points on $\ell$ at distance $r(\vx)$ to $\vx$, it follows that $e_r(\vx,T)\leq 2$ for all $\vx\in T$, and when $\vx$ is the first or last point of $T$ on $\ell$, $e_r(\vx,T)\leq 1$.
It follows that $e_r(T)\leq 2k-2$ (keeping in mind that $T\neq\emptyset$ by assumption).

In the case of extremal favourite distance digraphs, bounds~\eqref{bound0}, \eqref{bound2} and $e_r(T)\leq 2k-2$, together with \eqref{u5estimate} of Lemma~\ref{nklemma} give
\[ k(n-k) + \frac{k^2+2k-1}{2} \leq 2u_5(n)-2u_5(n-k)\leq k(n-k)+2k-2\text{,}\]
which simplifies to $(k-1)^2+2\leq 0$, a contradiction.

For extremal furthest neighbour digraphs, a similar calculation (now using \eqref{M5estimate} instead of \eqref{u5estimate}) gives that
\[ k(n-k) + \frac{k^2+4k-1}{2} \leq 2M_5(n)-2M_5(n-k)\leq k(n-k)+2k-2\text{,}\]
which simplifies to $k^2+3\leq 0$, another contradiction.

We have shown that $k=0$ in all cases when $d=5$, and it follows that $S$ is a Lenz construction.
\qed

\section*{Acknowledgement}
I thank the anonymous referee for careful proofreading and good advice on a previous version.

\end{document}